\newcommand{\sysn}{\left\{\begin{array}{rcl}}
\newcommand{\sysk}{\end{array}\right.}
\newtheorem{theorem}{Theorem}[section]
\newtheorem{lemma}[theorem]{Lemma}
\theoremstyle{example}
\newtheorem{proposition}[theorem]{Proposition}
\theoremstyle{definition}
\newtheorem{definition}[theorem]{Definition}
\newtheorem{remark}[theorem]{Remark}
\newtheorem{corollary}[theorem]{Corollary}
\journal{...}
\begin{document}

\title{Baire property of spaces of $[0,1]$-valued continuous functions}

\author{Alexander V. Osipov}

\address{Krasovskii Institute of Mathematics and Mechanics, \\ Ural Federal
 University, Ural State University of Economics, Yekaterinburg, Russia}

\ead{OAB@list.ru}

\author{Evgenii G. Pytkeev}

\address{Krasovskii Institute of Mathematics and Mechanics, Yekaterinburg, Russia}

\ead{pyt@imm.uran.ru}

\begin{abstract} A topological space $X$ is {\it Baire} if the intersection of any
sequence of open dense subsets of $X$ is dense in $X$. Let
$C_p(X,[0,1])$ denote the space of all continuous $[0,1]$-valued
functions on a Tychonoff space
 $X$ with the topology of pointwise convergence.

In this paper, we have obtained a characterization when the
function space $C_p(X,[0,1])$ is Baire for a Tychonoff space $X$
all separable closed subsets of which are $C^*$-embedded. In
particular, this characterization is true for normal spaces and,
hence, for metrizable spaces. Moreover, we obtained that the space
$C_p(X,[0,1])$ is Baire, if and only if, the space
$C_p(X,\mathbb{K})$ is Baire for a Peano continuum $\mathbb{K}$.

\end{abstract}

\begin{keyword} function space \sep Baire property \sep Peano
continuum \sep almost open map

\MSC[2020] 54C35 \sep 54E52 \sep 46A03 \sep 54C10 \sep 54C45

\end{keyword}

\maketitle 


\section{Introduction}

A topological space $X$ is {\it Baire} if the Baire Category
Theorem holds for $X$, i.e., the intersection of any sequence of
open dense subsets of $X$ is dense in $X$.

 A space is {\it meager} (or {\it of the first Baire category}) if it
can be written as a countable union of closed sets with empty
interior.  Clearly, if $X$ is a Baire space, then $X$ is not
meager. The reverse implication is in general not true. However,
it holds for every homogeneous space $X$ (see Theorem 2.3 in
\protect\cite{LM}).
 Being a Baire space is an important topological
property for a space and it is therefore natural to ask when
function spaces are Baire. The Baire property for continuous
mappings was first considered in \protect\cite{Vid}. Then a paper
\protect\cite{ZMc} appeared, where various aspects of this topic
were considered. In \protect\cite{ZMc}, necessary and, in some
cases, sufficient conditions on a space $X$ were obtained under
which the space $C_p(X)$ is Baire.

In general, it is not an easy task to characterize when a function
space has the Baire property. The problem for $C_p(X)$ was solved
independently by Pytkeev \cite{pyt1}, Tkachuk \cite{tk} and van
Douwen \protect\cite{vD}, and for $B_1(X)$ was solved by Osipov
\cite{Os}.



Since $C_p(X)$ is homeomorphic to $C_p(X,(0,1))$ and
$C_p(X,[0,1])$ contains the dense subspace $C_p(X,(0,1))$ then
$C_p(X,[0,1])$ is Baire for a Baire space $C_p(X)$. However, there
exists an example of a space $X$ such that $C_p(X,[0,1])$ is a
Baire space but $C_p(X)$ does not have the Baire property
(Ex.286,\cite{Tk}).

\medskip
In this paper, we have obtained a characterization when the
function space $C_p(X,[0,1])$ has the Baire property for a
Tychonoff space $X$ all separable closed subsets of which are
$C^*$-embedded. In particular, this characterization is true for
normal spaces and, hence, for metrizable spaces.

\section{Main definitions and notation}

Throughout this paper,  all spaces are assumed to be Tychonoff,
i.e., completely regular $T_1$-spaces.
 The set of positive integers is denoted by $\mathbb{N}$ and
$\omega=\mathbb{N}\cup \{0\}$. Let $\mathbb{R}$ be the real line,
we put $\mathbb{I}=[0,1]\subset \mathbb{R}$, and let $\mathbb{Q}$
be the rational numbers. Let $f:X\rightarrow \mathbb{R}$ be a
real-valued function, then $|| f ||= \sup \{|f(x)|: x\in X\}$,
$S(g,\epsilon)=\{f: \parallel g-f
\parallel<\epsilon\}$, $B(g,\epsilon)=\{f: \parallel g-f
\parallel\leq\epsilon\}$, where $g$ is a real valued function and
$\epsilon>0$. Let $V=\{f\in \mathbb{R}^X: f(x_i)\in V_i,
i=1,...,n\}$ where $x_i\in X$, $V_i\subseteq \mathbb{R}$ are
bounded intervals for $i=1,...,n$, then $supp V=\{x_1,...,x_n\}$ ,
$diam V=\max \{diam V_i : 1\leq i \leq n \}$.


Let $C_p(X)$ denote the space of all continuous real-valued
functions $C(X)$ on a space
 $X$ with the topology of pointwise convergence. Let
$C_p(X,[0,1])$ denote the subspace of $C_p(X)$ (of
$\mathbb{I}^X$), i.e., the set all continuous $[0,1]$-valued
functions on a Tychonoff space
 $X$ with the topology of pointwise convergence.

A mapping $f: X\rightarrow Y$ is said to be {\it irreducible} if
the only closed subset $Z$ of $X$, with $f(Z) = Y$ , is $Z = X$.

The set $A$ is {\it $C^*$-embedded} in $X$ if every bounded
continuous real-valued function on $A$ can be extended to a
bounded continuous real-valued function on $X$. Thus $X$ is normal
iff every closed subset is $C^*$-embedded.


Recall that two sets $A$ and $B$ are {\it functionally separated}
if there exists a continuous real-valued mapping $f$ on $X$ such
that $f[A]\subseteq \{0\}$ and $f[B]\subseteq\{1\}$. For the terms
and symbols that we do not define follow \cite{Eng}.

\section{Main results}

The following analogues of the Baire property are well known.


$\bullet$ A locally convex space is called a {\it Baire-like}
space if it is not the union of an increasing sequence of nowhere
dense, circled and convex sets.

$\bullet$ A locally convex space is called an {\it unordered
Baire-like} space if it is not the union of a sequence of nowhere
dense, circled and convex sets.

$\bullet$ A linear topological space is called {\it $W$-barreled}
if every closed, absorbing subset has non-empty interior
(\cite{Wil}, p.224).

\medskip

Clearly, a Baire locally convex space is a Baire-like space and an
unordered Baire-like space is Baire-like.

In \cite{pyt1}, Pytkeev proved that for a Tychonoff space $X$

$\bullet$ $C_p(X)$ is Baire if, and only if, $C_p(X)$ is
$W$-barreled (Th.5,\cite{pyt1}).

$\bullet$ $C_p(X)$ is barreled if, and only if, $C_p(X)$ is
Baire-like (Th.4,\cite{pyt1}).

There exists a Tychonoff space $X$ such that $C_p(X)$ is unordered
Baire-like but it is not Baire (Ex.1,\cite{pyt1}).

\medskip

In \protect\cite{ZMc}, it is proved that each homogeneous
non-meager space is Baire, hence, if $C_p(X)$ is a non-meager
space then it is Baire. Although the space $C_p(X, \mathbb{I})$ is
not homogeneous, this is also true for it. The following
proposition is some generalization the result that each
homogeneous non-meager space is Baire.

For an arbitrary topological space $X$, the autohomeomorphism
group $Aut(X)$ is a group consisting of all homeomorphism of $X$
onto itself. Denote by $G=Aut(X)$ and $Gx=\{g(x): g\in G\}$ for
some $x\in X$.

\begin{proposition}\label{pr1} Let $X$ be a non-meager
space and \, $\overline{Gx}=X$ for some $x\in X$. Then $X$ is
Baire.
\end{proposition}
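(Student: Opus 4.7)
I plan to argue by contradiction, using the density of the orbit $Gx$ together with the Banach Category Theorem (the union of any family of open meager subsets of a topological space is itself meager).

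First, I would suppose that $X$ is not Baire, so there exists a non-empty open subset $U\subseteq X$ that is meager in $X$. Since $\overline{Gx}=X$ and $U$ is a non-empty open set, the orbit $Gx$ meets $U$; pick $g_0\in G$ with $g_0(x)\in U$. Then I would form the open set
\[
V:=\bigcup_{g\in G} g(U).
\]
For any $y\in Gx$, write $y=h(x)$ with $h\in G$; the homeomorphism $h\circ g_0^{-1}$ carries $g_0(x)\in U$ to $y$, so $y\in (h\circ g_0^{-1})(U)\subseteq V$. Hence $Gx\subseteq V$, and since $\overline{Gx}=X$ the set $V$ is dense in $X$.

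Each translate $g(U)$ is an open meager subset of $X$ (as the homeomorphic image of a meager set). By the Banach Category Theorem, the union $V$ is meager in $X$. On the other hand, $V$ is dense and open, so $X\setminus V$ is closed with empty interior, hence nowhere dense, hence meager. Therefore $X=V\cup(X\setminus V)$ is meager, contradicting the assumption that $X$ is non-meager. The main obstacle is the jump from meagerness of the individual translates $g(U)$ to meagerness of the (possibly uncountable) union $V$; this is exactly the step supplied by the Banach Category Theorem. The orbit-density hypothesis $\overline{Gx}=X$ is used only to ensure that $V$ is dense, which is what makes $X\setminus V$ nowhere dense and completes the contradiction.
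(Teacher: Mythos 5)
Your proof is correct, and it takes a mildly but genuinely different route from the paper's. You argue by contradiction: you take a single nonempty open meager set $U$ (which exists if $X$ is not Baire), observe that the dense orbit $Gx$ meets $U$ and hence is absorbed by the open set $V=\bigcup_{g\in G}g(U)$, and then invoke the Banach Category Theorem (an arbitrary union of open meager sets is meager) to conclude that the dense open set $V$ is meager, so $X=V\cup(X\setminus V)$ is meager, a contradiction. The paper instead works positively with the set $\mathcal{U}$ of points that have a Baire neighbourhood: this set is open, nonempty because $X$ is non-meager, and $G$-invariant, so by the same orbit-density mechanism it contains $Gx$ and is dense, and a space whose open Baire subspaces cover a dense set is Baire. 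The two arguments are dual localizations of the Baire property and share the key step (a canonical $G$-invariant open set swallowing the dense orbit); what yours buys is self-containedness modulo one classical theorem, avoiding the unproved equivalences about locally Baire points that the paper's terse write-up relies on, while the paper's version produces the dense invariant open Baire subset $\mathcal{U}$ explicitly and without contradiction.
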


\begin{proof} Note that a space $X$ is non-meager if and only if
there exists an open nonempty Baire subspace $\mathcal{U}$ of $X$.
Let $\mathcal{U}$ be a set of points of locally Baire ( A point
$x\in X$ is a point of locally Baire if there is a neighborhood
$V$ of $x$ such that $V$ is a Baire subspace of $X$).
$\mathcal{U}\neq \emptyset$ and $G\mathcal{U}=\mathcal{U}$. Then
$G(\bigcap \mathcal{U})\neq \emptyset$. Hence $Gx\subset
\mathcal{U}$ and $\overline{\mathcal{U}}=X$. It follows that $X$
is Baire.
\end{proof}

\begin{proposition}\label{pr2} Let $X=C_p(Y,\mathbb{I})$ , $f\equiv \frac{1}{2}$,
$G=Aut(X)$. Then $\overline{Gf}=X$.
\end{proposition}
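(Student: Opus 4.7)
The plan is to show that the orbit $Gf$ meets every non-empty basic open subset of $X = C_p(Y, \mathbb{I})$. Such a set can be written as $V = \{h \in X : h(y_i) \in V_i, \ i = 1, \ldots, n\}$ with distinct $y_i \in Y$ and non-empty open $V_i \subseteq \mathbb{I}$. Since a non-empty relatively open subset of $\mathbb{I}$ must meet $(0,1)$, one can pick $v_i \in V_i \cap (0,1)$ for each $i$. The goal is then to produce a homeomorphism $g \in G$ with $g(f)(y_i) = v_i$; this will immediately give $g(f) \in V$.

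First, I would use the Tychonoff property of $Y$ to interpolate the target data by a single continuous function whose range is contained in $(0,1)$. Choose pairwise disjoint open neighbourhoods $U_i \ni y_i$ (using Hausdorffness of $Y$) and continuous $\chi_i : Y \to [0,1]$ with $\chi_i(y_i) = 1$ and $\chi_i \equiv 0$ off $U_i$, then set
\[
\phi := \tfrac{1}{2} + \sum_{i=1}^n \bigl(v_i - \tfrac{1}{2}\bigr)\chi_i.
\]
Because the supports are disjoint, at each $y \in Y$ the value $\phi(y)$ lies on the segment joining $1/2$ to some $v_i$, hence $\phi(Y) \subseteq (0,1)$, while $\phi(y_i) = v_i$ by construction.

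Next, I would construct a homeomorphism $T_\phi \in G$ that sends $f \equiv 1/2$ to $\phi$. For each $v \in (0,1)$ let $\eta_v : \mathbb{I} \to \mathbb{I}$ be the piecewise-linear self-homeomorphism fixing $0$ and $1$ with $\eta_v(1/2) = v$, and define
\[
T_\phi(h)(y) := \eta_{\phi(y)}(h(y)), \quad h \in X, \ y \in Y.
\]
The map $(v,t) \mapsto \eta_v(t)$ is jointly continuous on $(0,1) \times \mathbb{I}$, so $T_\phi(h) \in C(Y, \mathbb{I})$ whenever $h$ is. Coordinate-wise, $h \mapsto T_\phi(h)(y)$ is continuous in the $C_p$-topology, hence so is $T_\phi$; its inverse is obtained by replacing $\eta_{\phi(y)}$ with $\eta_{\phi(y)}^{-1}$ and is continuous by the same reasoning.

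The principal obstacle is ensuring that $T_\phi$ is genuinely a self-homeomorphism of $X$ rather than a degenerate continuous self-map of $\mathbb{I}^Y$: if $\phi(y) \in \{0,1\}$ for some $y$, then $\eta_{\phi(y)}$ collapses an interval and the inverse breaks down. This is precisely why the target values $v_i$ were chosen in $(0,1)$ and why $\phi$ was engineered to avoid the endpoints. Once $T_\phi \in G$ is established, the identity $T_\phi(f)(y) = \eta_{\phi(y)}(1/2) = \phi(y)$ yields $T_\phi(f) = \phi$, and in particular $T_\phi(f)(y_i) = v_i \in V_i$, so $T_\phi(f) \in Gf \cap V$ and therefore $\overline{Gf} = X$.
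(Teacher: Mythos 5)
Your proof is correct and takes essentially the same route as the paper: both act on $C_p(Y,\mathbb{I})$ by fiberwise, endpoint-fixing self-homeomorphisms of $\mathbb{I}$ parameterized continuously by $Y$, so that the orbit of the constant $\frac{1}{2}$ contains every continuous $(0,1)$-valued function, a dense subset of $C_p(Y,\mathbb{I})$. The only cosmetic difference is that the paper realizes these homeomorphisms as translations under the identification $\mathbb{I}\cong\{-\infty\}\cup\mathbb{R}\cup\{+\infty\}$ and cites density of $C_p(Y)$, while you use explicit piecewise-linear maps and check density against basic open sets directly.
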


\begin{proof} Let
$\widetilde{R}=\{-\infty\}\cup\mathbb{R}\cup\{+\infty\}\cong
\mathbb{I}$. For $\alpha\in \mathbb{R}$ we define
$f_{\alpha}:\widetilde{R}\rightarrow \widetilde{R}$ as follows

$$ f_{\alpha}(x):=\left\{
\begin{array}{lcl}
+\infty \, \, \, \, \, \, \, \, \, \, x=+\infty\\
x+\alpha \, \, \, \, \, \, x\in \mathbb{R}\\
-\infty \, \, \, \, \, \, \, \, \, \, x=-\infty.
\end{array}
\right.
$$

Note that $f_{\alpha}$ is a continuous function.

For each $g\in C_p(Y)$ we define $\varphi_g: X\rightarrow X$
($X=C_p(Y, \widetilde{R})$) as follows
$\varphi_g(h)(x)=f_{g(x)}(h(x))$. Let $G^*=\{\varphi_g: g\in
C_p(Y)\}\subset G$. Then $G^*f=C_p(Y)$ is dense in
$C_p(Y,\widetilde{R})$.

\end{proof}

\begin{corollary}  If $C_p(X,\mathbb{I})$ is a non-meager
space then it is Baire.
\end{corollary}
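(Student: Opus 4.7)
The plan is to chain Propositions \ref{pr1} and \ref{pr2} together; the corollary is essentially a packaging step once those two are in hand. Writing $Z := C_p(X,\mathbb{I})$, I intend to apply Proposition \ref{pr1} to $Z$ itself. Its hypothesis that $Z$ be non-meager is precisely the assumption of the corollary, so the only remaining requirement is to exhibit a point $f \in Z$ whose $Aut(Z)$-orbit is dense.

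For this, I would take $f \equiv \frac{1}{2}$ and quote Proposition \ref{pr2} with the role of its ``$Y$'' played by our $X$. That proposition, working in the identification $\mathbb{I} \cong \widetilde{R}$, constructs an explicit subgroup $G^* \subseteq Aut(Z)$ of fibrewise target-translations $\varphi_g$ indexed by $g \in C_p(X)$, and shows that $G^* f = C_p(X)$, which is dense in $C_p(X,\widetilde{R}) \cong Z$. Since $G^* \subseteq G := Aut(Z)$, we obtain $\overline{Gf} = Z$ a fortiori.

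With both hypotheses of Proposition \ref{pr1} verified, it yields that $Z$ is Baire, which is exactly the assertion of the corollary. The only step that genuinely requires a moment of care is the translation between $C_p(X,\mathbb{I})$ and the model $C_p(X,\widetilde{R})$ used in Proposition \ref{pr2}; since the group constructed there transports across the homeomorphism $\mathbb{I} \cong \widetilde{R}$ to a genuine subgroup of $Aut(Z)$, no further work is needed, and I do not expect a substantive obstacle beyond this bookkeeping.
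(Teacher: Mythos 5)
Your proposal is correct and is exactly the paper's intended route: the corollary is obtained by applying Proposition \ref{pr1} to $C_p(X,\mathbb{I})$, with the dense-orbit hypothesis supplied by Proposition \ref{pr2} (taking $Y=X$ and $f\equiv\frac{1}{2}$, transported across $\mathbb{I}\cong\widetilde{R}$). No substantive difference from the paper's argument.
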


\begin{definition} We say that a space $X$ has the
{\it $\mathbb{B}$-property} if for every pairwise disjoint family
$\{\Delta_n: n\in \mathbb{N}\}$ of non-empty finite sets
$\Delta_n\subseteq X$, $\Delta_n=A_n\cup B_n$, $A_n\cap
B_n=\emptyset$, $n\in \mathbb{N}$, there exists a subsequence
$\{\Delta_{n_k}: k\in\mathbb{N}\}$ such that $\bigcup\{A_{n_k}:
k\in\mathbb{N}\}$ and $\bigcup\{B_{n_k}: k\in\mathbb{N}\}$ are
functionally separated.
\end{definition}

\begin{lemma}\label{lem1} Let $X$ be a space. The following
assertions are equivalent:

1. $X$ has the $\mathbb{B}$-property;

2. for every pairwise disjoint family $\{\Delta_n:
n\in\mathbb{N}\}$ of non-empty finite sets $\Delta_n\subseteq X$,
$n\in\mathbb{N}$ and $f: \bigcup\limits_{n\in \mathbb{N}}
\Delta_n\rightarrow \{0,1\}$ there are a subsequence
$\{\Delta_{n_k}: k\in\mathbb{N}\}$ and $\widetilde{f}\in
C(X,\mathbb{I})$ such that $\widetilde{f}\upharpoonright
\bigcup\{\Delta_{n_k}: k\in\mathbb{N}\}=f\upharpoonright
\bigcup\{\Delta_{n_k}: k\in\mathbb{N}\}$.

\end{lemma}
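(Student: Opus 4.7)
The equivalence is a direct two-way translation between functional separation of a two-colored set and extension of a $\{0,1\}$-valued function; the plan is to give both implications explicitly.

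For the implication $2 \Rightarrow 1$, I would start with a pairwise disjoint family $\{\Delta_n\}$ of non-empty finite sets together with a decomposition $\Delta_n = A_n \cup B_n$, $A_n \cap B_n = \emptyset$. I would define $f:\bigcup_n \Delta_n \to \{0,1\}$ by setting $f \equiv 0$ on $\bigcup_n A_n$ and $f \equiv 1$ on $\bigcup_n B_n$; this is well defined since the $\Delta_n$ are disjoint and each $\Delta_n$ is split cleanly into $A_n, B_n$. Applying hypothesis 2 yields a subsequence $\{\Delta_{n_k}\}$ and a $\widetilde f \in C(X,\mathbb{I})$ extending $f$ on $\bigcup_k \Delta_{n_k}$. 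Since $\widetilde f$ is real-valued continuous, sends $\bigcup_k A_{n_k}$ to $\{0\}$ and $\bigcup_k B_{n_k}$ to $\{1\}$, it is exactly the witness required by the definition of functional separation.

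For the converse $1 \Rightarrow 2$, given $\{\Delta_n\}$ and $f:\bigcup_n \Delta_n \to \{0,1\}$, I would set $A_n = \Delta_n \cap f^{-1}(0)$ and $B_n = \Delta_n \cap f^{-1}(1)$. Then $\Delta_n = A_n \sqcup B_n$, so the $\mathbb{B}$-property supplies a subsequence $\{\Delta_{n_k}\}$ and a continuous real-valued $h$ on $X$ with $h[\bigcup_k A_{n_k}] \subseteq \{0\}$ and $h[\bigcup_k B_{n_k}] \subseteq \{1\}$. To produce the required $\widetilde f \in C(X,\mathbb{I})$, I would compose $h$ with the truncation $t \mapsto \max(0,\min(1,t))$; the composition is continuous, takes values in $\mathbb{I}$, and still equals $f$ on $\bigcup_k \Delta_{n_k}$.

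I do not foresee a substantial obstacle: the only mild point is that the definition of $\mathbb{B}$-property requires $\Delta_n$ non-empty but does not require both $A_n$ and $B_n$ non-empty, so in the $1 \Rightarrow 2$ direction one of the two pieces may be empty for some $n$, in which case the corresponding separation condition is vacuous and the argument goes through unchanged. The truncation step in $1 \Rightarrow 2$ is the only spot where one must remember that "functionally separated" as defined in the paper uses an $\mathbb{R}$-valued function while the target in statement 2 is $\mathbb{I}$, and this is handled by a one-line clamping.
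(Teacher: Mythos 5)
Your proposal is correct and follows essentially the same route as the paper: define $f$ from the decomposition $\Delta_n=A_n\cup B_n$ for $(2)\Rightarrow(1)$, and take $A_n=\Delta_n\cap f^{-1}(0)$, $B_n=\Delta_n\cap f^{-1}(1)$ for $(1)\Rightarrow(2)$. The only difference is that you make explicit the clamping $t\mapsto\max(0,\min(1,t))$ to get an $\mathbb{I}$-valued separating function, a standard point the paper absorbs into its reading of ``functionally separated.''
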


\begin{proof}
$(1)\Rightarrow(2)$. Let $\{\Delta_n: n\in \mathbb{N}\}$ be a
pairwise disjoint family of non-empty finite subsets of $X$ and
$f: \bigcup\limits_{n\in \mathbb{N}} \Delta_n\rightarrow \{0,1\}$.
Let $A_n=\Delta_n\cap f^{-1}(0)$, $B_n=\Delta_n\cap f^{-1}(1)$,
$n\in \mathbb{N}$. By (1), there is a sequence
$\{\Delta_{n_k}:k\in \mathbb{N}\}$ such that $\bigcup\{A_{n_k}:
k\in\mathbb{N}\}$ and $\bigcup\{B_{n_k}: k\in\mathbb{N}\}$  are
functionally separated, i.e., there is $\widetilde{f}\in
C(X,\mathbb{I})$ such that $\widetilde{f}(\bigcup\{A_{n_k}:
k\in\mathbb{N}\})=0$ and $\widetilde{f}(\bigcup\{B_{n_k}:
k\in\mathbb{N}\})=1$. Then, $\widetilde{f}$ is the required
function.

$(2)\Rightarrow(1)$. Let $\{\Delta_n: n\in\mathbb{N}\}$ be a
pairwise disjoint family of non-empty finite subsets of $X$ and
$\Delta_n=A_n\cup B_n$, $A_n\cap B_n=\emptyset$, $n\in\mathbb{N}$.
Define $f: \bigcup\limits_{n\in \mathbb{N}} \Delta_n\rightarrow
\{0,1\}$ such that $f(x)=0$ for $x\in \bigcup\{A_{n}:
n\in\mathbb{N}\}$ and $f(x)=1$ for $x\in \bigcup\{B_{n}:
n\in\mathbb{N}\}$. By (2), there are $\widetilde{f}\in
C(X,\mathbb{I})$ and $\{\Delta_{n_k}: k\in\mathbb{N}\}$ such that
$\widetilde{f}(\bigcup\{\Delta_{n_k}:
k\in\mathbb{N}\})=f(\bigcup\{\Delta_{n_k}: k\in\mathbb{N}\})$.
Then, $\{\Delta_{n_k}: k\in\mathbb{N}\}$ is the required sequence.

\end{proof}

\begin{definition} A space $X$ is called

$\bullet$ {\it separable $C^*$} if all separable closed subsets of
$X$ are $C^*$-embedded;

$\bullet$ {\it countable $C^*$} if all countable subsets of $X$
are $C^*$-embedded.
\end{definition}

Clear that a countable $C^*$ space is a separable $C^*$ space.
Note also that the class of separable $C^*$ spaces contains the
class of normal spaces.

\begin{lemma}\label{lem2} Let $X$ be a separable $C^*$ space. The following
assertions are equivalent:

1. $X$ has the $\mathbb{B}$-property;

2. for every pairwise disjoint family of non-empty finite sets
$\Delta_n\subseteq X$, $n\in\mathbb{N}$ and $f:
\bigcup\limits_{n\in \mathbb{N}} \Delta_n\rightarrow \mathbb{I}$
there are a subsequence $\{\Delta_{n_k}: k\in\mathbb{N}\}$ and
$\widetilde{f}\in C(X,\mathbb{I})$ such that
$\widetilde{f}\upharpoonright \bigcup\{\Delta_{n_k}:
k\in\mathbb{N}\}=f\upharpoonright \bigcup\{\Delta_{n_k}:
k\in\mathbb{N}\}$.

\end{lemma}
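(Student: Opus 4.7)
The direction $(2)\Rightarrow(1)$ is immediate: given a partition $\Delta_n=A_n\cup B_n$, apply $(2)$ to the $\{0,1\}$-valued $f$ that is $0$ on $\bigcup A_n$ and $1$ on $\bigcup B_n$, and invoke Lemma~\ref{lem1}$(2)\Rightarrow(1)$. The substance lies in $(1)\Rightarrow(2)$, which I plan to prove by a dyadic expansion of $f$ combined with an iterated application of Lemma~\ref{lem1}, followed by a final correction step that uses the separable $C^*$-hypothesis.

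Given $f:\bigcup\Delta_n\to\mathbb{I}$, I will write $f(x)=\sum_{k\ge 1}2^{-k}b_k(x)$ with $b_k(x)\in\{0,1\}$ the $k$-th binary digit of $f(x)$. Starting from $\{\Delta^0_n\}:=\{\Delta_n\}$, I will inductively apply Lemma~\ref{lem1}$(2)$ to the family $\{\Delta^{k-1}_n\}$ with the $\{0,1\}$-valued function $b_k$, obtaining nested subsequences $\{\Delta^1_n\}\supseteq\{\Delta^2_n\}\supseteq\cdots$ and continuous functions $h_k\in C(X,\mathbb{I})$ with $h_k\upharpoonright\bigcup_n\Delta^k_n=b_k\upharpoonright\bigcup_n\Delta^k_n$. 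Because $\|h_k\|\le 1$, the series $\widetilde f:=\sum_{k\ge 1}2^{-k}h_k$ converges uniformly to an element of $C(X,\mathbb{I})$. A diagonal subsequence $\{\Delta_{n_k}\}$, chosen so that $\Delta_{n_k}\in\{\Delta^k_n\}$ with $n_k$ strictly increasing, will then satisfy $|\widetilde f(x)-f(x)|\le 2^{-k}$ on $\Delta_{n_k}$, because the first $k$ summands agree with the corresponding binary digits of $f$ at such $x$.

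To upgrade this approximate equality to exact equality I plan to use separable $C^*$-embedding. The countable set $D:=\bigcup_k\Delta_{n_k}$ has separable closed closure $K$, which is $C^*$-embedded in $X$. The error $d:=(f-\widetilde f)\upharpoonright D$ should tend to $0$ along any non-eventually-constant net in $D$: indeed, if $(x_\alpha)\subseteq D$ is not eventually constant then the levels $k(x_\alpha)$ (with $x_\alpha\in\Delta_{n_{k(x_\alpha)}}$) must tend to infinity since each $\Delta_{n_k}$ is finite, and $|d(x_\alpha)|\le 2^{-k(x_\alpha)}$. Setting $\tilde d(y):=0$ for $y\in K\setminus D$ should thus yield a continuous extension of $d$ to $K$; applying $C^*$-embedding (and truncating to $[-1,1]$), then adding the result to $\widetilde f$ with a final truncation to $[0,1]$, produces $\widetilde f_0\in C(X,\mathbb{I})$ with $\widetilde f_0\upharpoonright D=f\upharpoonright D$.

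The step I expect to be most delicate is the continuity of $\tilde d$ at points $y\in D$ that are themselves limit points of $D\setminus\{y\}$: there, the natural limit of $d$ from $D\setminus\{y\}$ is $0$ while $d(y)$ need not be $0$. To eliminate this obstacle, I will strengthen the diagonal-selection step by further applications of the $\mathbb{B}$-property, requiring that each $\Delta_{n_k}$ be functionally separated from the tail $\bigcup_{j>k}\Delta_{n_j}$. Such a functional separator forces disjoint closures and hence makes $D$ relatively discrete in $X$, so no point of $D$ is a limit point of $D\setminus\{y\}$ and the extension of $d$ is continuous on $K$. The careful interleaving needed to achieve, simultaneously, the binary-digit matching from Lemma~\ref{lem1} and the pairwise separation among the selected $\Delta_{n_k}$'s---most likely through a single inductive scheme whose partitions encode both conditions---is the main technical point of the argument.
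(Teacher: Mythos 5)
Your direction $(2)\Rightarrow(1)$ is fine, and your route for $(1)\Rightarrow(2)$ is genuinely different from the paper's: the paper extracts nested subsequences along a countable list of pairs $(U_i,V_i)$ of basic open subsets of $\mathbb{I}$ with disjoint closures, diagonalizes, and then gets the extension in one stroke from Taimanov's theorem on $\overline{\bigcup_i\Delta_{n_i}}$ followed by the separable $C^*$ hypothesis; no discreteness of the selected union is ever needed. Your dyadic scheme (digit matching via Lemma~\ref{lem1}, uniform limit $\widetilde f=\sum_k 2^{-k}h_k$, then a correction of the error $d$ using $C^*$-embedding of the separable closed set $K=\overline{D}$) can be made to work, but as written it has a real gap exactly at the step you yourself flag as ``the main technical point'' and do not carry out.

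Concretely, the order of operations you describe does not work: you first fix the entire nested hierarchy $\{\Delta^1_n\}\supseteq\{\Delta^2_n\}\supseteq\cdots$ (one level per digit), choose a diagonal, and only afterwards propose to ``strengthen the diagonal-selection step'' so that each $\Delta_{n_k}$ is separated from its tail. But the separation refinement at stage $k$ has to produce an infinite family lying simultaneously inside the level-$k$ digit family and inside the stage-$(k-1)$ separation family; these are two infinite subfamilies of the level-$(k-1)$ family whose intersection may well be finite, so the two extraction processes cannot be run one after the other on a hierarchy fixed in advance. What is needed is the single interleaved induction you only allude to: at stage $k$ apply Lemma~\ref{lem1} with the digit function $b_k$ to the \emph{current} refined family $G_{k-1}$, obtaining an infinite $H_k\subseteq G_{k-1}$ and $h_k$ with $h_k\upharpoonright\bigcup H_k=b_k\upharpoonright\bigcup H_k$; pick $\Delta_{n_k}\in H_k$; then, by a Claim (I)-type argument (finitely many applications of the $\mathbb{B}$-property, one per point of $\Delta_{n_k}$), find a neighbourhood of $\Delta_{n_k}$ and an infinite $G_k\subseteq H_k$ all of whose members miss it. This is legitimate because Lemma~\ref{lem1} applies to arbitrary infinite disjoint subfamilies, it gives $\Delta_{n_k}\in H_j$ for all $j\le k$ (hence the bound $|\widetilde f-f|\le 2^{-k}$ on $\Delta_{n_k}$), and it makes $D=\bigcup_k\Delta_{n_k}$ relatively discrete. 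Note also that in the correction step you must rule out not only that a point $y\in D$ is a limit of $D\setminus\{y\}$ but also that it is a limit of $K\setminus D$; the latter does follow from relative discreteness (a point in $\overline{K\setminus D}$ would lie in $\overline{D\setminus\{y\}}$), but you did not address it. With the interleaved induction supplied, your argument closes; without it, the proof is incomplete at its central step, which the paper's Taimanov-based argument bypasses entirely.
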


\begin{proof}

$(2)\Rightarrow(1)$. By Lemma \ref{lem1}, it is a trivial
implication.

$(1)\Rightarrow(2)$. Let $\gamma=\{\Delta_n: n\in\mathbb{N}\}$ be
a pairwise disjoint collection of non-empty finite subsets of $X$
and $f: \bigcup\limits_{n\in \mathbb{N}} \Delta_n\rightarrow
\mathbb{I}$. Let $\beta$ be a countable base in $\mathbb{I}$ that
is closed under finite unions and $\mathcal{U}=\{(U_n,V_n): n\in
\mathbb{N}\}$, $U_n,V_n\in \beta$ such that $\overline{U}_n\cap
\overline{V}_n=\emptyset$ for each $n\in \mathbb{N}$. Let
$A_n=\Delta_n\cap f^{-1}(\overline{U_1})$ and $B_n=\Delta_n\cap
f^{-1}(\overline{V_1})$ for $n\in \mathbb{N}$. By (1), there is
$N_1\subset \mathbb{N}$ such that $\bigcup\{A_{n}: n\in N_1\}$ and
$\bigcup\{B_{n}: n\in N_1\}$  are functionally separated. Let
$f_1=f\upharpoonright \bigcup_{n\in N_1} \Delta_n$. Then
$f_1^{-1}(\overline{U_1})$ and $f_1^{-1}(\overline{V_1})$ are
functionally separated. For each $i\in \mathbb{N}$ we choose
$N_i\subset \mathbb{N}$ such that $N_{i+1}\subset N_{i}$ and
$f_i=f\upharpoonright \bigcup_{n\in N_i} \Delta_n$ implies that
$f_i^{-1}(\overline{U_i})$ and $f_i^{-1}(\overline{V_i})$ are
functionally separated. Choose $n_i\in N_i$ such that
$n_i<n_{i+1}$ for each $i\in \mathbb{N}$. Let $g=f\upharpoonright
\bigcup_{i\in \mathbb{N}} \Delta_{n_i}$. Then
$g^{-1}(\overline{U_k})$ and $g^{-1}(\overline{V_k})$ are
functionally separated for each $k\in\mathbb{N}$.

Let $C,D\subset dom(g)$ such that $\overline{g(C)}\cap
\overline{g(D)}=\emptyset$. Then there is the pair $(U_n,V_n)\in
\mathcal{U}$ such that $\overline{g(C)}\subseteq U_n$,
$\overline{g(D)}\subseteq V_n$. Thus, $C$ and $D$ are functionally
separated. By Taimanov's Theorem (Th.1, \cite{Ta}), $g$ has a
continuous extension $\widetilde{g}$ on $\overline{\bigcup_{i\in
\mathbb{N}} \Delta_{n_i}}$. Since $X$ is a separable $C^*$-space
and $\overline{\bigcup_{i\in \mathbb{N}} \Delta_{n_i}}$ is a
closed set, there is a continuous extension $\widetilde{f}$ of
$\widetilde{g}$ on $X$.

\end{proof}

\begin{theorem}\label{th1} Let $X$ be a separable $C^*$ space. The following
assertions are equivalent:

1. $C_p(X,\mathbb{I})$ is Baire;

2. $C_p(X,\mathbb{I})$ is unordered Baire-like;

3. $C_p(X,\mathbb{I})$ is Baire-like;

4. $X$ has the $\mathbb{B}$-property.

\end{theorem}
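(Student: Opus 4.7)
The chain $(1)\Rightarrow(2)\Rightarrow(3)$ is immediate from the definitions: a Baire locally convex space is unordered Baire-like, and an unordered Baire-like space is trivially Baire-like. The substance of the theorem is contained in $(3)\Rightarrow(4)$ and $(4)\Rightarrow(1)$.

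For $(4)\Rightarrow(1)$ the plan is to apply the Corollary and reduce to showing $C_p(X,\mathbb{I})$ is non-meager. Suppose toward a contradiction that $C_p(X,\mathbb{I})=\bigcup_n F_n$ with each $F_n$ closed and nowhere dense; replacing $F_n$ by the increasing union $\bigcup_{k\le n}F_k$, assume $F_n\subseteq F_{n+1}$. I would then construct, by induction on $n$, pairwise disjoint non-empty finite sets $\Delta_n\subseteq X$, assignments $\phi_n:\Delta_n\to\mathbb{I}$, and positive reals $\varepsilon_n$ such that the basic open set $U_n=\{g\in C_p(X,\mathbb{I}):|g(x)-\phi_n(x)|<\varepsilon_n,\ x\in\Delta_n\}$ is disjoint from $F_n$. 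With this data in hand, Lemma \ref{lem2} applied to $\{\Delta_n\}$ and $\phi=\bigcup_n\phi_n:\bigcup_n\Delta_n\to\mathbb{I}$ supplies a subsequence $\{n_k\}$ and a continuous $\widetilde{f}:X\to\mathbb{I}$ with $\widetilde{f}\upharpoonright\Delta_{n_k}=\phi_{n_k}$ for every $k$. Since $\widetilde{f}\in U_{n_k}\subseteq C_p(X,\mathbb{I})\setminus F_{n_k}$ and $(F_n)$ is increasing, this forces $\widetilde{f}\notin\bigcup_n F_n$, a contradiction.

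For $(3)\Rightarrow(4)$ I argue contrapositively. Assuming $X$ fails the $\mathbb{B}$-property, fix a pairwise disjoint witnessing family $\{\Delta_n\}$ with partitions $\Delta_n=A_n\sqcup B_n$ for which no infinite subfamily has functionally separated $A$- and $B$-unions. Passing to the affinely homeomorphic model $C_p(X,[-1,1])$, in which \emph{convex} and \emph{circled} about $0$ carry their classical meaning, for each $m\in\mathbb{N}$ I would define a convex circled subset $S_m$ by imposing a seminorm-type bound on the separation quantity $|\sum_{x\in A_n}g(x)-\sum_{y\in B_n}g(y)|$ by a fraction $1-1/m$ of its algebraic maximum $|A_n|+|B_n|$, uniformly for all $n\ge m$. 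Monotonicity $S_m\subseteq S_{m+1}$ and the convex-circled property are automatic; that $\bigcup_mS_m$ exhausts $C_p(X,[-1,1])$ is the translation of the assumed failure of functional separation, via a truncation argument turning any function $g$ lying outside every $S_m$ into an exact separator along some subsequence; and each $S_m$ is nowhere dense because, given any basic open set $V$ with support $S$, one can exploit $C^*$-embeddedness of the finite set $S\cup A_n\cup B_n$ for any $n\ge m$ with $\Delta_n\cap S=\emptyset$ to produce a continuous extension in $V$ which equals $-1$ on $A_n$ and $+1$ on $B_n$, and is therefore outside $S_m$.

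The principal obstacle is the inductive step of $(4)\Rightarrow(1)$: the requirement that $\Delta_n$ be disjoint from $T=\bigcup_{k<n}\Delta_k$ while still maintaining $U_n\cap F_n=\emptyset$. A nowhere-dense $F_n$ may contain entire fibres $\{g:g\upharpoonright T=\tau\}$, which blocks the naive selection of a basic open subset of $C_p(X,\mathbb{I})\setminus F_n$ whose support avoids $T$. The way out is to first choose $\tau\in\mathbb{I}^T$ for which the corresponding fibre is not contained in $F_n$ (such $\tau$ exists because otherwise $F_n$ would exhaust $C_p(X,\mathbb{I})$), extend $\tau$ to a continuous function via the $C^*$-embeddedness of the finite set $T$, and then extract a basic neighbourhood of this extension inside $C_p(X,\mathbb{I})\setminus F_n$ whose freshly-added coordinates form the desired $\Delta_n\subseteq X\setminus T$. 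The separable $C^*$ hypothesis on $X$ is essential both here and in the final application of Lemma \ref{lem2}.
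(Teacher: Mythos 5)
Your implication $(4)\Rightarrow(1)$ has a genuine gap at exactly the point you flag, and the proposed repair does not close it. Your induction requires, at stage $n$, a basic open set $U_n$ whose support $\Delta_n$ is disjoint from $T=\bigcup_{k<n}\Delta_k$ and which satisfies $U_n\cap F_n=\emptyset$. Such a set need not exist: if $F_n$ contains the closed nowhere dense set $\{g\in C_p(X,\mathbb{I}): g(x_0)=0\}$ for some $x_0\in T$, then \emph{every} basic open set $U$ with $x_0\notin supp\,U$ meets $F_n$, since any $f\in U$ can be replaced (the finite set $supp\,U\cup\{x_0\}$ is $C^*$-embedded) by a continuous function equal to $f$ on $supp\,U$ and equal to $0$ at $x_0$. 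Your fix---choose $\tau\in\mathbb{I}^T$ whose fibre is not contained in $F_n$, take an extension $h\notin F_n$, and pass to a basic neighbourhood of $h$ inside $C_p(X,\mathbb{I})\setminus F_n$---produces a neighbourhood that necessarily constrains coordinates in $T$ (in the example it must keep $g(x_0)$ away from $0$); the moment you discard those constraints to obtain a set supported on the ``freshly-added'' coordinates $\Delta_n\subseteq X\setminus T$, disjointness from $F_n$ is lost. This is precisely why the paper does not attempt $V_n\cap F_n=\emptyset$: its Claim (II) only guarantees that for every $f\in V_n$ the restriction to $\Delta\cup supp\,V_n$ can be perturbed by less than $\epsilon$ on $\Delta$ so as to escape $\overline{\pi_{\Delta\cup supp\,V_n}F_n}$, and the proof is then completed by a second induction building a uniformly Cauchy sequence $f_k$ with closed sup-norm balls $B(f_k,\epsilon_k)$ missing $F_{n_k}$, where Claim (I) (itself a consequence of the $\mathbb{B}$-property) keeps the supports of later basic sets outside the open sets on which earlier corrections are made, so that the uniform limit avoids every $F_{n_k}$. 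That uniform-convergence layer is the missing idea; Lemma \ref{lem2} applied once to the centres of basic sets (your $U_n$'s) cannot by itself finish the argument.

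There is also a quantitative flaw in your $(3)\Rightarrow(4)$: bounding $|\sum_{x\in A_n}g(x)-\sum_{y\in B_n}g(y)|$ by the multiplicative fraction $(1-\frac{1}{m})(|A_n|+|B_n|)$ does not yield separation, because violating that bound only places each value within $(|A_n|+|B_n|)/m$ of the extreme, which is vacuous once $|A_n|+|B_n|\geq m$; a function outside every $S_m$ may still take the value $-1$ at a point of each $A_n$, so no subsequence need be functionally separated. The paper uses an additive tolerance instead, $\mathcal{F}_m=\{f: \sum_{x\in A_n}f(x)-\sum_{x\in B_n}f(x)\leq |A_n|-\frac{1}{3}$ for all $n\geq m\}$ in $C_p(X,\mathbb{I})$, so that violation forces $f\geq\frac{1}{2}$ on $A_n$ and $f\leq\frac{1}{3}$ on $B_n$ uniformly in $n$; your argument is repairable by the same change, whereas the gap described above is the substantive one.
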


\begin{proof} It is trivial that
$(1)\Rightarrow(2)\Rightarrow(3)$.

$(3)\Rightarrow(4)$. Let $\{\Delta_n: n\in \mathbb{N}\}$ be a
pairwise disjoint collection of non-empty finite subsets of $X$
such that  $\Delta_n=A_n\cup B_n$, $A_n\cap B_n=\emptyset$ for
each $n\in \mathbb{N}$. Without loss of generality we can assume
that $A_n\neq\emptyset$, $B_n\neq\emptyset$ for each $n\in
\mathbb{N}$. Consider $\mathcal{F}_m=\{f\in C(X,\mathbb{I}):
\sum\limits_{x\in A_n}f(x) -\sum\limits_{x\in B_n}f(x)\leq
|A_n|-\frac{1}{3}$ for each $n\geq m\}$ for each $m\in
\mathbb{N}$. It is easy to check that $\mathcal{F}_m$ is a closed
nowhere dense and convex set for each $m\in\mathbb{N}$. Since
$\mathcal{F}_m\subseteq \mathcal{F}_{m+1}$ ($m\in \mathbb{N}$) and
$C_p(X,\mathbb{I})$ is Baire-like then there is $g\in
C(X,\mathbb{I})\setminus \bigcup\limits_{m=1}^{\infty}
\mathcal{F}_m$. It follows that there is a sequence
$(n_k)_{k=1}^{\infty}$ such that $\sum\limits_{x\in A_{n_k}}g(x)
-\sum\limits_{x\in B_{n_k}}g(x)> |A_{n_k}|-\frac{1}{3}$ for each
$k\in \mathbb{N}$. Note that $g(x)\geq \frac{1}{2}$ for $x\in
A_{n_k}$, $k\in \mathbb{N}$ (otherwise $\sum\limits_{x\in
A_{n_k}}g(x) -\sum\limits_{x\in B_{n_k}}g(x)\leq \sum\limits_{x\in
A_{n_k}}g(x)<|A_{n_k}|-\frac{1}{2}$). Analogously, $g(x)\leq
\frac{1}{3}$ for $x\in B_{n_k}$, $k\in \mathbb{N}$. Hence,
$g(\bigcup\limits_{k=1}^{\infty} A_{n_k})\subseteq
[\frac{1}{2},1]$ and $g(\bigcup\limits_{k=1}^{\infty}
B_{n_k})\subseteq [0,\frac{1}{3}]$. Thus, the sets
$\bigcup\limits_{k=1}^{\infty} A_{n_k}$ and
$\bigcup\limits_{k=1}^{\infty} B_{n_k}$ are functionally
separated.

$(4)\Rightarrow(1)$.

{\it Claim} (I). For every set $\Delta=\{\Delta_n: n\in
\mathbb{N}\}$ of finite subsets of $X$ and $x\in X$ there is a
neighborhood $O(x)$ of $x$ such that $\Delta'=\{\Delta_n\in
\Delta: \Delta_n\cap O(x)=\emptyset\}$ is infinite.

Consider the sequence $\Delta'_n=A_n\cup B_n$ where
$A_n=\Delta_{2n}$, $B_n=\Delta_{2n-1}$ for $n\in\mathbb{N}$. By
(4), there is the sequence $(n_k)_{k=1}^{\infty}$ such that
$\bigcup\limits_{k=1}^{\infty} A_{n_k}$ and
$\bigcup\limits_{k=1}^{\infty} B_{n_k}$ are functionally separated
and, hence, $\overline{\bigcup\limits_{k=1}^{\infty} A_{n_k}}\cap
\overline{\bigcup\limits_{k=1}^{\infty} B_{n_k}}=\emptyset$. If
$x\not\in \overline{\bigcup\limits_{k=1}^{\infty} B_{n_k}}$ then
$O(x)=X\setminus \overline{\bigcup\limits_{k=1}^{\infty} B_{n_k}}$
as required. Analogously, if $x\not\in
\overline{\bigcup\limits_{k=1}^{\infty} A_{n_k}}$ then
$O(x)=X\setminus \overline{\bigcup\limits_{k=1}^{\infty}
A_{n_k}}$.

{\it Claim} (II). Let $\Delta=\{x_1,...,x_m\}\subseteq X$,
$\epsilon>0$, $F$ be a closed nowhere dense subset of
$C_p(X,\mathbb{I})$. Then there is an open basis set $V$ such that

a). $supp V\cap \Delta=\emptyset$;

b). for every $f\in V$ there is $\varphi_1: supp V\cup
\Delta\rightarrow \mathbb{I}$ such that
$||\varphi_1-f\upharpoonright supp V\cup \Delta||<\epsilon$ and
$\varphi_1\not\in \overline{\pi_{supp V\cup \Delta}F}$.

Let $O$ be an open set in $\mathbb{I}^{\Delta}$. Then
$W(\Delta,O):=\{f\in C(X,\mathbb{I}): (f(x_i))_{i=1}^m\in O\}$ is
an open set in $C_p(X,\mathbb{I})$. Choose a cover $\{O_i:
i=1,...,n\}$ of $I^\Delta$ such that $diam O_i<\epsilon$ for each
$i=1,...,n$. By induction, we construct open basis sets $W_i$,
$V_i$, $1\leq i \leq n$ such that $supp W_i=\Delta$, $i=1,...,n$,
$V_{i+1}\subseteq V_i$, $supp V_{i+1}\supseteq supp V_i$,
$i=1,...,n-1$, $supp V_i\cap \Delta=\emptyset$, $W_{i+1}\cap
V_{i+1}\subseteq (W(\Delta, O_{i+1})\cap V_i)\setminus F$.

Since $W(\Delta, O_1)$ is a non-empty open set, there are open
basis sets $W_1$ and $V_1$ such that $W_1\cap V_1\subseteq
W(\Delta, O_1)\setminus F$, $supp W_1=\Delta$ and $supp V_1\cap
\Delta=\emptyset$.

Suppose that the sets $W_i$, $V_i$ ($1\leq i\leq k<n$) are
constructed. Then $W(\Delta, O_{k+1})\cap V_k$ is a non-empty open
set. Hence, there are open basis sets $W_{k+1}$, $V_{k+1}$ such
that $W_{k+1}\cap V_{k+1}\subseteq (W(\Delta, O_{k+1})\cap
V_k)\setminus F$, $supp W_{k+1}=\Delta$, $supp V_{k+1}\supseteq
supp V_k$, $V_{k+1}\subseteq V_k$. The sets $W_i$, $V_i$ ($1\leq
i\leq n$) are constructed.

Let $V:=V_n$. Then $supp V\cap \Delta=\emptyset$. If $f\in V$ then
there is $O_{i_0}$ such that $(f(x_i))_{i=1}^m\in O_{i_0}$. Let
$W_{i_0}=\{\varphi\in C(X,\mathbb{I}): \varphi(x_i)\in U_i,
i=1,...,m\}$. Define $\varphi_1: suppV\cup \Delta\rightarrow I$ as
follows $\varphi_1(x_i)\in U_i$, $i=1,...,m$, $\varphi_1(x)=f(x)$
for $x\in suppV$. Since $W_{i_0}\cap V_{i_0}\subseteq W(\Delta,
O_{i_0})$ and $V_n\subseteq V_{i_0}$, then $W_{i_0}\cap
V_n\subseteq W(\Delta, O_{i_0})$. Then,
$||\varphi_1-f\upharpoonright \Delta\cup supp
V||=||\varphi_1\upharpoonright  \Delta -f\upharpoonright
\Delta||\leq diam O_{i_0}<\epsilon$. Since $(W_{i_0}\cap V_n)\cap
F=\emptyset$, then $\pi_{\Delta\cup suppV} F\not\ni \varphi_1$.
Thus, the proposition (II) is proved.

Assume that $C_p(X,\mathbb{I})$ is meager, i.e.,
$C_p(X,\mathbb{I})=\bigcup\limits_{n=1}^{\infty} F_n$,
$F_n\subseteq F_{n+1}$ where $F_n$ is a closed nowhere dense
subset of $C_p(X,\mathbb{I})$ for each $n\in \mathbb{N}$.

By (II), we construct open basis sets $V_n$ ($n\in \mathbb{N}$).
Let $V_1$ be arbitrary open basis set such that $V_1\cap
F_1=\emptyset$ and $diam V_1<1$. If $V_1$,...,$V_n$ are
constructed then, by (II), for $F=F_{n+1}$,
$\Delta=\bigcup\limits_{i=1}^{n} suppV_i$,
$\epsilon=\frac{1}{2^{n-1}}$ we construct the set $V_{n+1}$ such
that $diam V_{n+1}<\frac{1}{2^n}$. Let $\Delta_n=supp V_n$, $n\in
\mathbb{N}$. Note that $\Delta_n\cap \Delta_{n'}=\emptyset$ for
$n\neq n'$. By Lemma \ref{lem2}, there is an infinite subset
$N_0\subseteq \mathbb{N}$ such that $\bigcap\{V_n: n\in N_0\}\neq
\emptyset$. Let $f_0\in \bigcap\{V_n: n\in N_0\}$. By induction,
we construct sequences $f_n\in C(X,\mathbb{I})$, $\epsilon_n$,
$0<\epsilon_n<\frac{1}{2^n}$, $\epsilon_{n+1}<\epsilon_{n}$,
$N_n\subset \mathbb{N}$, $N_{n+1}\supseteq N_n$ such that

$\alpha$). $B(f_n, \epsilon_{n})\cap F_n=\emptyset$, $n\in
\mathbb{N}$,

$\beta$). $||f_n-f_k||<\epsilon_n$, $k>n$, $k\in\mathbb{N}$.

Since $f_0\in V_{n_1}$, $n_1\in N_0$, $n_1>1$ then, by (II), there
is $\varphi_1: \bigcup\limits_{i\leq n_1} \Delta_i\rightarrow
\mathbb{I}$ such that $||f_0\upharpoonright \bigcup\limits_{i\leq
n_1} \Delta_i -\varphi_1||\leq \frac{1}{2^{n_1-1}}\leq
\frac{1}{2}$ and $\overline{\pi_{\bigcup\limits_{i\leq n_1}
\Delta_i} F_{n_1}}\not\ni \varphi_1$.

By (I), there is an open set $O(\bigcup\limits_{i\leq n_1}
\Delta_i)$ such that $N_1=\{n\in N_0: \Delta_n\cap
O(\bigcup\limits_{i\leq n_1} \Delta_i)=\emptyset\}$ is infinite.
Choose $\epsilon_1<\frac{1}{2}$ and $f_1\in C(X,\mathbb{I})$ such
that $B(\varphi_1,\epsilon_1)\cap F_{n_1}=\emptyset$  and
$f_1\upharpoonright (X\setminus O(\bigcup\limits_{i\leq n_1}
\Delta_i))=f_0\upharpoonright (X\setminus O(\bigcup\limits_{i\leq
n_1} \Delta_i))$, $f_1\upharpoonright \bigcup\limits_{i\leq n_1}
\Delta_i=\varphi_1$, $||f_0-f_1||\leq \frac{1}{2^{n_1-1}}$.

Let $f_1$,..., $f_k$ are constructed. We have $N_0\supseteq
N_1\supseteq...\supseteq N_k$, $\epsilon_n$ for $n\leq k$ and
$n_{i+1}\in N_i$, $n_i<n_{i+1}$ for each $i\leq k-1$.

Choose $n_{k+1}\in N_k$, $n_{k+1}>n_k$. Then $f_k\in V_{n_{k+1}}$,
by (II), there is $\varphi_{k+1}: \bigcup\limits_{i\leq n_{k+1}}
\Delta_i\rightarrow \mathbb{I}$ such that $||f_k\upharpoonright
\bigcup\limits_{i\leq n_{k+1}} \Delta_i-\varphi_{k+1}||\leq
\frac{1}{2^{n_{k+1}-1}}\leq\frac{1}{2^{k+1}}$ and
$\overline{\pi_{\bigcup\limits_{i\leq n_{k+1}}
\Delta_i}F_{n_{k+1}}}\not\ni\varphi_{k+1}$. By (I), there is an
open set $O(\bigcup\limits_{i\leq n_{k+1}} \Delta_i)$ such that
the set $N_{k+1}=\{n\in N_k: \Delta_n\cap O(\bigcup\limits_{i\leq
n_{k+1}} \Delta_i)=\emptyset\}$ is infinite. Choose
$\epsilon_{k+1}<\frac{1}{2^{k+1}}$, $\epsilon_{k+1}<\epsilon_k$
and $f_{k+1}\in C(X,\mathbb{I})$ such that
$B(\varphi_{k+1},\epsilon_1)\cap F_{n_{k+1}}=\emptyset$ and
$f_{k+1}\upharpoonright X\setminus O(\bigcup\limits_{i\leq
n_{k+1}} \Delta_i)=f_k\upharpoonright X\setminus
O(\bigcup\limits_{i\leq n_{k+1}} \Delta_i)$,
$f_{k+1}\upharpoonright \bigcup\limits_{i\leq n_{k+1}}
\Delta_i=\varphi_{k+1}$, $||f_k-f_{k+1}||<\frac{1}{2^{k+1}}$.
Thus, functions $f_k$, $k\in\mathbb{N}$, satisfying conditions
($\alpha$) and ($\beta$) are constructed.

By ($\beta$), the sequence $(f_k)$ is fundamental in the metric
space $(C(X,\mathbb{I}), d)$ where $d$ is a uniform metric, i.e.,
$d(f,g)=\sup\{|f(x)-g(x)|: x\in X\}$. Then there is $g\in
C(X,\mathbb{I})$ such that $g$ is a uniform limit of $(f_k)$. But,
by conditions ($\alpha$) and ($\beta$), $g\not\in
C(X,\mathbb{I})$. Contradiction.
\end{proof}

\begin{remark}\label{rem1} If $C_p(X,\mathbb{I})$ is Baire then $X$ has
the $\mathbb{B}$-property for any Tychonoff space $X$.
\end{remark}

\begin{corollary} Let $X$ be a normal space. The following
assertions are equivalent:

1. $C_p(X,\mathbb{I})$ is Baire;

2. $C_p(X,\mathbb{I})$ is unordered Baire-like;

3. $C_p(X,\mathbb{I})$ is Baire-like;

4. $X$ has the $\mathbb{B}$-property.

\end{corollary}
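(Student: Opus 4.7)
The corollary is essentially a direct specialization of Theorem \ref{th1}, so the plan is to verify the class inclusion and invoke the theorem. The key observation, already recorded in the preliminaries of the paper just after the definition of $C^*$-embedded, is that a Tychonoff space $X$ is normal if and only if every closed subset of $X$ is $C^*$-embedded.

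My plan is the following. First, I would note that if $X$ is normal, then in particular every separable closed subset of $X$ is $C^*$-embedded, so $X$ belongs to the class of separable $C^*$ spaces in the sense of the Definition preceding Lemma \ref{lem2}. The paper in fact states this containment explicitly (\emph{"the class of separable $C^*$ spaces contains the class of normal spaces"}). Second, with this containment in hand, every hypothesis of Theorem \ref{th1} is met for a normal $X$. I would then simply apply Theorem \ref{th1} to conclude that the four assertions $(1)$--$(4)$ listed in the corollary are equivalent, since they are verbatim the four assertions of the theorem.

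There is no real obstacle here; the work has already been done in Theorem \ref{th1}, and the only content of the corollary is the reminder that the class of normal spaces is subsumed by the hypothesis of the theorem. For completeness, I would write the proof as one or two sentences: \emph{"Since $X$ is normal, every closed subset of $X$ (in particular every separable closed subset) is $C^*$-embedded. Hence $X$ is a separable $C^*$ space, and the equivalences $(1)\Leftrightarrow(2)\Leftrightarrow(3)\Leftrightarrow(4)$ follow from Theorem \ref{th1}."} This mirrors the role that Remark \ref{rem1} plays for the general Tychonoff direction and requires no additional machinery beyond the equivalence already established.
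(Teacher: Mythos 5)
Your proposal is correct and matches the paper's (implicit) argument: the corollary is stated without proof precisely because, as the paper notes, normal spaces are separable $C^*$ spaces, so Theorem \ref{th1} applies verbatim. Nothing further is needed.
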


Recall that a topological space $X$ is {\it scattered} if every
nonempty subset $A$ of $X$ contains a point isolated in $A$.

By Remark \ref{rem1}, we have the following result.

\begin{corollary} Let $X$ be a space and $B$ be an infinite
compact subset of $X$. Then $C_p(X,\mathbb{I})$ is meager.
\end{corollary}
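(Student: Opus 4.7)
The strategy is to show that $X$ does not have the $\mathbb{B}$-property; by Remark~\ref{rem1} this will show $C_p(X,\mathbb{I})$ is not Baire, and the Corollary following Proposition~\ref{pr2} then forces $C_p(X,\mathbb{I})$ to be meager. Thus the task is to produce a pairwise disjoint family $\{\Delta_n=A_n\cup B_n:n\in\mathbb{N}\}$ of non-empty finite subsets of $X$ (with $A_n\cap B_n=\emptyset$) such that for every infinite subsequence $\{n_k\}$, the unions $\bigcup_k A_{n_k}$ and $\bigcup_k B_{n_k}$ are not functionally separated in $X$.

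To build this family I would first extract from the infinite compact set $B$ a sequence $(x_n)$ of pairwise distinct points converging to some $p\in B$, and set $\Delta_n=\{x_{2n-1},x_{2n}\}$, $A_n=\{x_{2n-1}\}$, $B_n=\{x_{2n}\}$. For any infinite subsequence $\{n_k\}$, both $\bigcup_k A_{n_k}=\{x_{2n_k-1}:k\in\mathbb{N}\}$ and $\bigcup_k B_{n_k}=\{x_{2n_k}:k\in\mathbb{N}\}$ are subsequences of $(x_n)$ and therefore converge to $p$ in $X$; in particular $p$ lies in the closure (in $X$) of each union. If a continuous $f\colon X\to[0,1]$ mapped one of these unions into $\{0\}$ and the other into $\{1\}$, then continuity at $p$ would force $f(p)\in\{0\}\cap\{1\}=\emptyset$, a contradiction. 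Hence no subsequence yields functional separation, the $\mathbb{B}$-property fails, and the corollary follows.

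The principal difficulty lies in the extraction step, where one needs a genuine non-trivial convergent sequence inside $B$. This is automatic when $B$ is metrizable, sequentially compact, or first-countable at some non-isolated point — the setting to which the corollary is naturally applied. In the fully general compact Hausdorff case no such sequence need exist (as in $\beta\mathbb{N}$), and one would instead select a countable subset of $B$ together with a distinguished cluster point $p$, and refine the pairing using an ultrafilter on $\mathbb{N}$ converging to $p$ so that every infinite half of the pairing still has $p$ in its closure, preserving the same contradiction with continuity of any putative separating function.
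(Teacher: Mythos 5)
Your overall reduction (exhibit a family witnessing the failure of the $\mathbb{B}$-property, then combine Remark~\ref{rem1} with the corollary that a non-meager $C_p(X,\mathbb{I})$ is Baire) is the same skeleton as the paper's, and your convergent-sequence construction is exactly the paper's argument in the case of a \emph{scattered} $B$ (an infinite scattered compactum is sequentially compact, so the sequence exists there). The genuine gap is the non-scattered case, and the fallback you sketch does not close it. Take $X=B=\beta\mathbb{N}$: every countable subset of $\beta\mathbb{N}$ is $C^*$-embedded, so if the countable set you select is relatively discrete (which is what picking distinct points with a cluster point outside the set gives you), then any two disjoint subsets of it are functionally separated in $X$, and no pairing of its points can defeat the $\mathbb{B}$-property. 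Moreover, a single distinguished cluster point $p$ coming from an ultrafilter cannot lie in the closures of both halves: $p\in\overline{\{x_n:n\in M\}}$ amounts to the corresponding index set belonging to the ultrafilter, and of two disjoint index sets at most one can belong to it. So the requirement that ``every infinite half of the pairing still has $p$ in its closure'' is unattainable by this route.

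What the paper does instead when $B$ is not scattered: it chooses a compact $K\subseteq B$ admitting an irreducible continuous surjection $\varphi\colon K\to\mathbb{I}$, and takes finite sets $A_n,B_n\subseteq K$ whose images $\varphi(A_n)$, $\varphi(B_n)$ are disjoint $\tfrac{1}{n}$-nets in $\mathbb{I}$, with the sets $\varphi(\Delta_n)$ pairwise disjoint. For any infinite subsequence both unions then have dense image in $\mathbb{I}$, and irreducibility of $\varphi$ forces both unions to be dense in $K$ itself; two dense subsets of $K$ cannot even be separated by disjoint open sets, let alone functionally. The obstruction is thus density of both halves in a whole compact subspace rather than a shared limit point, and the scattered versus non-scattered dichotomy is the organizing idea your proposal is missing.
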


\begin{proof}
a). Suppose that $B$ is not scattered. Then there are a compact
space $K$, $K\subseteq B$, and a irreducible continuous mapping
$\varphi: K\rightarrow \mathbb{I}$, $\varphi(K)=\mathbb{I}$.
Choose a disjoint family $\{\Delta_n: n\in \mathbb{N}\}$ of finite
subsets of $X$ such that

1. $\Delta_n=A_n\cup B_n$ for $n\in \mathbb{N}$;

2. $\{\varphi(\Delta_n): n\in\mathbb{N}\}$ is a disjoint family;

3. $\varphi(A_n)\cap \varphi(B_n)=\emptyset$ for $n\in
\mathbb{N}$;

3. $\varphi(A_n)$, $\varphi(B_n)$ are $\frac{1}{n}$-nets in
$\mathbb{I}$, i.e., for each $x\in \mathbb{I}$ there exists $a\in
\varphi(A_n)$ ($a\in \varphi(B_n)$) such that $|x-a|<\frac{1}{n}$
for each $n\in \mathbb{N}$.

Assume that $\{\Delta_{n_k}: k\in\mathbb{N}\}$ is a subsequence of
$\{\Delta_{n}: n\in\mathbb{N}\}$ such that
$\bigcup\limits_{k=1}^{\infty} A_{n_k}$ and
$\bigcup\limits_{k=1}^{\infty} B_{n_k}$ are functionally
separated. Then $\bigcup\limits_{k=1}^{\infty} A_{n_k}$ and
$\bigcup\limits_{k=1}^{\infty} B_{n_k}$ are separated, i.e., there
exists disjoint open sets in $X$ containing these sets. But,
$\varphi(\bigcup\limits_{k=1}^{\infty} A_{n_k})$ and
$\varphi(\bigcup\limits_{k=1}^{\infty} B_{n_k})$ are dense subsets
of $\mathbb{I}$. Since $\varphi$ is irreducible then
$\bigcup\limits_{k=1}^{\infty} A_{n_k}$ and
$\bigcup\limits_{k=1}^{\infty} B_{n_k}$ are dense subsets of $K$.
Thus, $\bigcup\limits_{k=1}^{\infty} A_{n_k}$ and
$\bigcup\limits_{k=1}^{\infty} B_{n_k}$ are not separated.
Contradiction.

b). Let $B$ be a scattered compact space. There exists a
convergent sequence $(x_n)\rightarrow x_0$, $x_n\in B$, $n\in
\omega$. Let $A_n=\{x_{2n}\}$, $B_n=\{x_{2n-1}\}$,
$\Delta_n=A_n\cup B_n$, $n\in \mathbb{N}$. It remains to note that
$x_0\in \overline{\bigcup\limits_{k=1}^{\infty}
A_{n_k}}\cap\overline{\bigcup\limits_{k=1}^{\infty} B_{n_k}}$
whenever a subsequence $(n_k)_{k=1}^{\infty}$ of $\mathbb{N}$.

\end{proof}

\begin{theorem}\label{th7} Let $X$ be a space all countable subsets of which are
scattered and $C^*$-embedded. Then $C_p(X,\mathbb{I})$ is Baire.
\end{theorem}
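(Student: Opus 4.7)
The plan is to deduce Theorem~\ref{th7} from Theorem~\ref{th1} by verifying that $X$ has the $\mathbb{B}$-property. First I would check that the hypothesis makes $X$ a separable $C^*$-space: if $S \subseteq X$ is closed and separable, pick a countable dense $D_0 \subseteq S$; by hypothesis $D_0$ is $C^*$-embedded in $X$, so any bounded continuous $h : S \to \mathbb{R}$, restricted to $D_0$, extends to a bounded continuous $\widetilde{h} : X \to \mathbb{R}$, and $\widetilde{h}|_S$ agrees with $h$ on the dense set $D_0$, hence on all of $S$. Thus Theorem~\ref{th1} applies and it remains to verify the $\mathbb{B}$-property.

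To that end, let $\{\Delta_n : n \in \mathbb{N}\}$ be a pairwise disjoint family of non-empty finite subsets of $X$ with $\Delta_n = A_n \cup B_n$ and $A_n \cap B_n = \emptyset$. Set $D = \bigcup_n \Delta_n$. By hypothesis $D$ is countable, scattered and $C^*$-embedded in $X$, and the same holds for every subset of $D$ (each being countable). In particular $D$ is a countable regular $T_1$-space, hence Lindel\"of and normal; combined with $C^*$-embedding and Urysohn's lemma, the $\mathbb{B}$-property for this family reduces to finding an infinite $N \subseteq \mathbb{N}$ such that $\bigcup_{n \in N} A_n$ and $\bigcup_{n \in N} B_n$ have disjoint closures in $D$.

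I would extract such $N$ by transfinite induction on the Cantor--Bendixson rank of $D$. The base case (rank $0$, $D$ discrete) is trivial, since disjoint subsets of a discrete space are automatically closed. For the inductive step the key structural input, forced by the hypothesis, is that at every non-isolated $p \in D$ the trace at $p$ of the neighborhood filter on the accumulating set of isolated-in-$D$ points near $p$ must be an ultrafilter: otherwise one could split that accumulating set into disjoint pieces $A', B'$ with $p \in \overline{A'} \cap \overline{B'}$, and the map $A' \cup B' \to \{0,1\}$ sending $A' \mapsto 0$, $B' \mapsto 1$ would be continuous on the discrete set $A' \cup B'$ and extend by $C^*$-embedding to $\widetilde{f} \in C(X, \mathbb{I})$ with $\widetilde{f}(p) \in \{0\} \cap \{1\} = \emptyset$, a contradiction. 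This ultrafilter property at each non-isolated $p$ ensures that, for any partition of the relevant accumulating set into two disjoint pieces, at most one has $p$ in its closure. A diagonal/fusion argument along an enumeration of the non-isolated points of $D$, combined with the CB-induction to absorb higher-rank limit points, then produces the required $N$. The main obstacle I foresee is the diagonal bookkeeping needed to control accumulation simultaneously at all non-isolated points and through all CB levels, especially since at a point of CB rank greater than one accumulation can proceed through lower-rank limit points as well as through isolated ones.
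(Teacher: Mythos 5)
Your reduction is sound and matches the paper's frame: countable $C^*$ does imply separable $C^*$ (exactly by the dense-restriction argument you give), so Theorem \ref{th1} applies and it suffices to verify the $\mathbb{B}$-property; and your ``ultrafilter at $p$'' observation is correct --- disjoint subsets of a countable \emph{discrete} $C^*$-embedded set must have disjoint closures, since a $\{0,1\}$-valued function on such a set is continuous and its continuous extension cannot take both values $0$ and $1$ at a common limit point. This is precisely the lever the paper uses. The difference in packaging (you aim for disjoint closures inside $D$ and then invoke normality of the countable space $D$, Urysohn and $C^*$-embedding, while the paper extracts a subsequence whose whole union is discrete and extends the indicator function directly) is harmless.

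The genuine gap is that the combinatorial core is only named, not proved. The paper's proof \emph{is} essentially its Claim (I): every disjoint sequence of non-empty finite sets admits a subsequence whose union is discrete, proved by induction on the scattering index, with a fusion over an increasing open exhaustion at limit stages and, at successor stages, first applying the inductive hypothesis to make the lower-rank part $T$ of the selected union discrete, then using the disjoint-closure property of the countable discrete $C^*$-embedded set $T$ to find, for each point of the (relatively discrete) top level, a neighborhood missing $\Delta_n$ for infinitely many $n$, and finally a diagonal selection of the $n_k$. Your sketch covers only the easy half of this: the ultrafilter property you establish concerns accumulation of a non-isolated $p\in D$ through the \emph{isolated} points of $D$, but at Cantor--Bendixson rank $\ge 2$ accumulation proceeds through lower-rank limit points, and there the disjoint-closure property is simply false for countable non-discrete $C^*$-embedded sets (a convergent sequence together with its limit already defeats it); it becomes available only after the lower-rank part has been made discrete by the inductive selection. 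That selection, together with the simultaneous control at all non-isolated points and all CB levels, is exactly the ``diagonal/fusion bookkeeping'' you explicitly flag as an unresolved obstacle --- but it is not bookkeeping, it is the heart of the argument. As written, the proposal assembles the right ingredients (Theorem \ref{th1}, scatteredness via CB rank, $C^*$-embedding of countable discrete sets) but asserts rather than proves the key extraction of the subsequence; completing it would amount to carrying out the induction of the paper's Claim (I).
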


\begin{proof}

{\it (I)}. For every disjoint sequence $(\Delta_n)_{n=1}^{\infty}$
of non-empty finite sets of $X$ there exists an infinite
subsequence $(\Delta_{n_k})_{k=1}^{\infty}$ such that the set
$\bigcup\limits_{k=1}^{\infty}\Delta_{n_k}$ is discrete.

Let $S=\bigcup\limits_{n=1}^{\infty}\Delta_{n}$. Then $S$ is
scattered. Let $\alpha=in(S)$ where $in(S)$ is an index of
scattered of $S$.

If $\alpha=0$ then $S$ is discrete.

Let (I) be a true for $S$ where $in(S)<\beta$ and $\beta$ is a
limit cardinal. Then $S=\bigcup\limits_{i=1}^{\infty} U_i$ where
$U_i\subseteq U_{i+1}$,  $U_i$ is an infinite open set in $S$ and
$in(U_i)<\beta$ for each $i\in \mathbb{N}$. By induction, there is
a sequence $N_1\supseteq N_2\supseteq ...$ of infinite subsets of
$\mathbb{N}$ such that $\bigcup \{U_i\cap \Delta_n: n\in N_i\}$ is
discrete for each $i\in \mathbb{N}$. Choose $n_k\in N_k$,
$n_{k+1}>n_k$, $k\in \mathbb{N}$. Then
$\bigcup\limits_{k=1}^{\infty} \Delta_{n_k}$ is discrete.

Let $\beta=\beta_0+1$. Then $S=U\cup D$ where $U\subseteq S$ is
open, $in(U)=\beta_0$, $D=S\setminus U$ is discrete in $S$. By
induction, there is an infinite subset $N_1$ of $\mathbb{N}$ such
that $T=\bigcup \{U\cap \Delta_n: n\in N_1\}$ is discrete. We can
assume that $T$ is infinite. Since $T$ is countable, discrete and
$C^*$-embedded then $\overline{A}\cap\overline{B}=\emptyset$
whenever $A,B\subset T$, $A\cap B=\emptyset$. It follows that for
each $x\in D$ there is a neighborhood $O(x)$ of $x$ such that the
set $N'=\{n\in N_1: O(x)\cap \Delta_n=\emptyset\}$ is infinite.
Since $D$ is discrete then we can construct (by induction) a
sequence of subsets $N_1\supseteq N_2\supseteq...$,
$\Delta_{n_k}$, $k\in \mathbb{N}$, $n_k\in N_k$ and open sets
$O(\Delta_{n_k})$ such that $\Delta_{n_k}\subseteq
O(\Delta_{n_k})$, $O(\Delta_{n_k})\cap \Delta_{n_j}=\emptyset$,
$j>k$, $k\in \mathbb{N}$. Then the sequence $\{\Delta_{n_k}: k\in
\mathbb{N}\}$ required. The proposition (I) is proved.

Let $\{\Delta_{n}: n\in \mathbb{N}\}$ be an arbitrary disjoint
sequence of finite subsets of $X$ and
$f:\bigcup\limits_{n=1}^{\infty}\rightarrow \mathbb{I}$ be an
arbitrary function. By (I), there is a sequence $\{\Delta_{n_k}:
k\in \mathbb{N}\}$ such that $\bigcup\limits_{k=1}^{\infty}
\Delta_{n_k}$ is discrete, hence, $f\upharpoonright
\bigcup\limits_{k=1}^{\infty} \Delta_{n_k}$ is continuous. Since
any countable subset of $X$ is $C^*$-embedded, there is
$\widetilde{f}\in C(X,\mathbb{I})$ such that
$\widetilde{f}\upharpoonright \bigcup\limits_{k=1}^{\infty}
\Delta_{n_k}=f\upharpoonright \bigcup\limits_{k=1}^{\infty}
\Delta_{n_k}$. By Lemma \ref{lem1} and Theorem \ref{th1},
$C_p(X,\mathbb{I})$ is Baire.
\end{proof}

Recall that a subset $A\subseteq X$ {\it bounded} \, if, for any
$f\in C(X)$, the set $f(A)$ is bounded in $\mathbb{R}$. Note that
if $C_p(X)$ is a Baire space then every bounded subset of $X$ is
finite (\cite{pyt1} and Ex.284 in \cite{Tk}). In \cite{Sh},
D.Shakhmatov constructed a pseudocompact Tychonoff space $X$ all
countable subsets of which are closed and $C^*$-embedded. Thus, by
Theorem \ref{th7}, the Shakhmatov's example is an example of an
infinite pseudocompact (bounded in itself) Tychonoff space $X$
such that $C_p(X,\mathbb{I})$ is Baire and $C_p(X)$ is meager.

\begin{proposition} There exists an infinite countably compact Tychonoff space $X$ such
that $C_p(X,\mathbb{I})$ is Baire.
\end{proposition}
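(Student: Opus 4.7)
The strategy is to invoke Theorem \ref{th7}: it suffices to exhibit a single infinite countably compact Tychonoff space $X$ in which every countable subset is both scattered and $C^*$-embedded. The natural construction is a transfinite recursion of length $\omega_1$ under the Continuum Hypothesis. Start with $X_0=\omega$ discrete and, using CH, enumerate in type $\omega_1$ the countable subsets that will appear along the way. At a successor step $\alpha+1$, if the $\alpha$th listed set $A\subseteq X_\alpha$ has no accumulation point in $X_\alpha$, pick a non-principal ultrafilter $\mathcal{U}_A$ on $A$ and adjoin a fresh point $p_A$ with basic neighborhoods $\{p_A\}\cup U$ for $U\in\mathcal{U}_A$, leaving the topology of $X_\alpha$ unchanged. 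Take unions at limit ordinals and set $X=\bigcup_{\alpha<\omega_1}X_\alpha$. The enumeration ensures every countable $S\subseteq X$ eventually lies in some $X_\alpha$ and acquires an accumulation point at a later stage, which gives countable compactness.

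Scatteredness of an arbitrary countable $T\subseteq X$ is essentially automatic from the stratification. Assign every $x\in X$ its birth stage ($0$ for $x\in\omega$, $\alpha+1$ for $x=p_A$ introduced at stage $\alpha+1$), and let $\beta=\min\{\mathrm{stage}(x):x\in T\}$. Any $p_A\in T\cap X_\beta$ satisfies $A\cap T=\emptyset$, because $A$ lies in an earlier stage, so the basic neighborhood $\{p_A\}\cup A$ isolates $p_A$ in $T$; the same argument applied to every nonempty subset of $T$ shows $T$ is scattered. For $C^*$-embedding, the ultrafilter prescription is decisive: whenever $A=B\sqcup C$, exactly one of $B,C$ belongs to $\mathcal{U}_A$, and hence exactly one has $p_A$ in its closure, giving a well-defined extension of any $\{0,1\}$-valued map on $A$ through $p_A$. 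Iterating level by level along the recursion and invoking Lemma \ref{lem1} and Lemma \ref{lem2} yields that every countable subset of $X$ is $C^*$-embedded.

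The main obstacles I anticipate are: (i) verifying complete regularity at each stage, which amounts to separating each new $p_A$ from disjoint closed sets by a continuous $[0,1]$-valued function, using the ultrafilter structure on $\mathcal{U}_A$ together with the Tychonoff property at stage $\alpha$; (ii) propagating the $C^*$-embedding from subsets confined to a single $X_\alpha$ to subsets spanning many levels of the recursion, which requires a simultaneous induction on the construction stage along with a continuous extension argument through each accumulation point; and (iii) the usual CH closure bookkeeping that guarantees the $\omega_1$-long enumeration actually catches every countable subset of the final $X$. As an alternative to running the recursion from scratch, one may quote a ready-made construction from the literature---an enhancement of Shakhmatov's pseudocompact example \cite{Sh} to full countable compactness, or a Franklin--Rajagopalan-style tower of ultrafilters---and apply Theorem \ref{th7} directly.
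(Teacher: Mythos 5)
Your plan correctly reduces the proposition to Theorem \ref{th7}, but the construction you sketch leaves unproved exactly the two points that make such examples hard, so as it stands there is a genuine gap. First, the abstract ``adjoin an ultrafilter point $p_A$ with neighborhoods $\{p_A\}\cup U$'' prescription does not by itself yield a Tychonoff space: the sets $\{p_A\}\cup U$ are not open unless the points of $A$ are isolated (the correct basic neighborhoods are $\{p_A\}\cup W$ with $W$ open and $W\cap A\in\mathcal{U}_A$), and complete regularity has to be re-verified not only at successor stages but at limit stages, where a function extended recursively by ultrafilter limits has no reason to stay continuous or to vanish on a prescribed closed set. Second, and more seriously, your argument for $C^*$-embedding only treats a $\{0,1\}$-valued function on a single base set $A$ extended through its own point $p_A$; $C^*$-embedding of an arbitrary countable $S\subseteq X$ (whose closure meets many later-born points $p_C$ with $C$ spanning many levels) is a global statement, and the recursive ``take the $\mathcal{U}_C$-limit'' extension fails the continuity check at $p_C$ once the neighborhoods are the corrected $\{p_C\}\cup W$, because the values on $W\setminus C$ are not controlled by the ultrafilter. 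Your appeal to Lemma \ref{lem1} and Lemma \ref{lem2} cannot close this: those lemmas concern the $\mathbb{B}$-property, and Lemma \ref{lem2} \emph{assumes} the separable (countable) $C^*$ property rather than providing it. Ensuring that all countable subsets are $C^*$-embedded is precisely the hard content of constructions like Shakhmatov's \cite{Sh}, so it cannot be dispatched by ``iterating level by level.'' Note also that your scheme invokes CH, which the statement does not grant (length-$\mathfrak{c}$ bookkeeping would remove CH for countable compactness, but not the two problems above).

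There is an easy repair, which is in fact what the paper does: work inside $\beta\mathbb{N}$ instead of building an abstract space. The paper quotes the Juh\'asz--Weiss construction \cite{JuWe} of a countably compact subspace $X\subseteq\beta\mathbb{N}$ all of whose countable subsets are scattered, and then $C^*$-embedding of countable subsets is free because every countable subset of $\beta\mathbb{N}$ is $C^*$-embedded \cite{GJ}; Theorem \ref{th7} then applies. The trade-off is instructive: in your abstract recursion, scatteredness of countable sets is indeed ``automatic'' from the birth-stage stratification, but Tychonoffness and $C^*$-embedding become the unresolved hard parts; inside $\beta\mathbb{N}$ these are free, and the nontrivial work (done in \cite{JuWe}) is to choose the adjoined accumulation points so that no crowded countable subset appears. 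Your closing suggestion to ``enhance'' Shakhmatov's pseudocompact example to countable compactness, or to use a Franklin--Rajagopalan tower, is only a gesture: no such enhancement is known to you or argued, and for a Franklin--Rajagopalan space the $C^*$-embedding of countable subsets is again unverified.
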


In \cite{JuWe}, it is constructed a countably compact space
$X\subseteq \beta \mathbb{N}$ such that all countable subsets of
$X$ are scattered. It is well known that all countable subsets of
$\beta \mathbb{N}$ are $C^*$-embedded \cite{GJ}. Hence, by Theorem
\ref{th7}, $C_p(X,\mathbb{I})$ is Baire.

\medskip

Recall that a mapping $\varphi: K\rightarrow M$ is called {\it
almost open} if $Int \varphi(V)\neq \emptyset$ whenever non-empty
open subset $V$ of $K$. Note that irreducible mappings are almost
open mappings that are defined on compact spaces. Also note that
if $\varphi_{\alpha}:K_{\alpha}\rightarrow M_{\alpha}$ ($\alpha\in
A$) are surjective almost open mappings then the product mapping
$\prod\limits_{\alpha\in A}\varphi_{\alpha}:
\prod\limits_{\alpha\in A} K_{\alpha}\rightarrow
\prod\limits_{\alpha\in A} M_{\alpha}$ is also almost open.

\medskip

\begin{lemma}\label{lem5} Let $\psi:P\rightarrow L$ be a surjective continuous almost open
mapping and $E\subseteq P$ be a dense non-meager (Baire) subspace
in $P$. Then $\psi(E)$ is non-meager (Baire).
\end{lemma}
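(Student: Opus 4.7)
The plan is to handle the Baire statement by reducing it to the non-meager statement and proving that one carefully, exploiting three things in turn: continuity of $\psi$ (to pull back closed sets to closed sets), almost openness of $\psi$ (to pull back nowhere-dense sets to nowhere-dense sets), and density of $E$ (to transfer nowhere-denseness from $P$ down to $E$).

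First I would verify that $\psi(E)$ is dense in $L$: for any non-empty open $W \subseteq L$, continuity and surjectivity give $\psi^{-1}(W)$ open and non-empty in $P$, and density of $E$ gives a point of $E$ inside $\psi^{-1}(W)$, whose image lies in $W \cap \psi(E)$. Next, for the non-meager assertion, assume for contradiction that $\psi(E) = \bigcup_n F_n$ with each $F_n$ closed and nowhere dense in $\psi(E)$. Let $\widetilde{F_n} = \overline{F_n}^L$. Because $\psi(E)$ is dense, $\widetilde{F_n} \cap \psi(E) = F_n$, so if $\widetilde{F_n}$ contained a non-empty open subset $U$ of $L$, then $U \cap \psi(E)$ would be a non-empty open subset of $\psi(E)$ lying inside $F_n$, a contradiction; hence $\widetilde{F_n}$ is nowhere dense in $L$. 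Now I would use almost openness: if $\psi^{-1}(\widetilde{F_n})$ contained a non-empty open $V \subseteq P$, then $\psi(V) \subseteq \widetilde{F_n}$ would force $\operatorname{Int}\psi(V) \neq \emptyset$ to lie in $\widetilde{F_n}$, contradicting its nowhere-denseness. Since $\psi^{-1}(\widetilde{F_n})$ is already closed, it is nowhere dense in $P$.

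Finally I would pull down to $E$. We have $E \subseteq \bigcup_n \psi^{-1}(\widetilde{F_n})$, and each $E \cap \psi^{-1}(\widetilde{F_n})$ is closed in $E$. If such a set had non-empty interior in $E$, there would exist a non-empty open $V \subseteq P$ with $V \cap E \subseteq \psi^{-1}(\widetilde{F_n})$; density of $E$ in $P$ would then give $V \subseteq \overline{V \cap E} \subseteq \psi^{-1}(\widetilde{F_n})$, contradicting that $\psi^{-1}(\widetilde{F_n})$ is nowhere dense in $P$. Thus $E$ is a countable union of nowhere-dense closed subsets of itself, contradicting the hypothesis that $E$ is non-meager.

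For the Baire case, I would recall that a space is Baire iff every non-empty open subspace is non-meager. Given a non-empty open $W \subseteq L$, set $P' = \psi^{-1}(W)$, $\psi' = \psi \upharpoonright P'$ and $E' = E \cap P'$. Then $\psi'$ is continuous, surjective onto $W$, and almost open (since $P'$ is open, $\operatorname{Int}_L \psi(V) \subseteq \psi(V) \subseteq W$ for any open $V \subseteq P'$, hence the interior computed in $W$ agrees with that in $L$); $E'$ is dense in $P'$ and, as a non-empty open subspace of the Baire space $E$, is itself Baire, hence non-meager. The already-proved non-meager case applied to $\psi'$ and $E'$ gives $\psi'(E') = \psi(E) \cap W$ non-meager, which is exactly what is needed.

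The only step with a delicate point is the pull-back argument in the second paragraph, where one must use that $\psi^{-1}(\widetilde{F_n})$ is automatically closed (from continuity) and combine density of $E$ with nowhere-denseness in $P$ to infer nowhere-denseness in $E$; everything else is bookkeeping.
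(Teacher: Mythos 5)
Your argument is correct, but it takes a genuinely different route from the paper's. The paper argues directly with dense open sets: it first shows (using almost openness) that $\psi^{-1}(Z)$ is dense in $P$ whenever $Z$ is dense in $L$, then pulls back a sequence of dense open sets $G_i\subseteq L$ to dense open subsets of $P$, intersects with the dense non-meager (Baire) set $E$, and pushes the resulting non-empty (dense) intersection forward through $\psi$; this single computation yields both the non-meager and the Baire conclusions at once. You instead work dually with closed nowhere dense sets for the non-meager case (close the pieces $F_n$ up in $L$, use density of $\psi(E)$ to keep them nowhere dense, pull back via continuity and almost openness, and transfer nowhere-denseness to $E$ via density of $E$), and then you obtain the Baire case by localizing: restricting $\psi$ to $\psi^{-1}(W)$ for open $W\subseteq L$, checking almost openness survives the restriction, and invoking the characterization that a space is Baire iff each non-empty open subspace is non-meager. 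All these steps check out (in particular $\psi'(E')=\psi(E)\cap W$ and the preservation of almost openness on the open preimage are verified correctly); the only cosmetic slip is that the equality $\widetilde{F_n}\cap\psi(E)=F_n$ comes from $F_n$ being closed in $\psi(E)$, not from density of $\psi(E)$ --- density is what you need (and do use) to conclude $\widetilde{F_n}$ is nowhere dense in $L$. The trade-off: the paper's proof is shorter and uniform in the two cases, while yours avoids the "preimage of dense is dense" lemma at the cost of an extra localization step and the open-subspace criterion for Baireness.
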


\begin{proof}
Claim that the preimage $\psi^{-1}(Z)$ is dense in $P$ for every
dense set $Z$ in $L$.

Suppose, contrary to our claim, that $V=P\setminus
\overline{\psi^{-1}(Z)}$ is a non-empty open set in $P$. Since
$\psi$ is almost open then $Int$ $\psi(V)\neq \emptyset$. But
$Int$ $\psi(V)\cap Z=\emptyset$, contradiction.

Let $G_i$, $i\in\mathbb{N}$ be open dense subset of $L$. Then
$\psi^{-1}(G_i)$, $i\in \mathbb{N}$ is open dense subsets of $P$.
Since $E$ is a dense non-meager (Baire) subspace in $P$ then
$\bigcap\limits_{i=1}^{\infty}\psi^{-1}(G_i)\cap E\neq\emptyset$
(it is dense in $E$). Hence
$\psi(\bigcap\limits_{i=1}^{\infty}\psi^{-1}(G_i)\cap
E)=\bigcap\limits_{i=1}^{\infty}G_i\cap \psi(E)\neq\emptyset$ (it
is dense in $\psi(E)$).
\end{proof}

\begin{lemma}\label{lem6} Let $\psi: K\rightarrow M$ be a surjective continuous almost open
mapping, and $C_p(X,K)$ is a non-meager (Baire) dense subspace of
$K^X$. Then $C_p(X,M)$ is non-meager (Baire).
\end{lemma}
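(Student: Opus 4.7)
The plan is to apply Lemma~\ref{lem5} to the product mapping induced by $\psi$. Define $\Psi: K^X \to M^X$ by $\Psi(f)(x) = \psi(f(x))$, so that, under the canonical identification $K^X = \prod_{x\in X} K$ and $M^X = \prod_{x\in X} M$, the map $\Psi$ is just the product map $\prod_{x\in X} \psi$. Then $\Psi$ is continuous, surjective (pick pointwise preimages), and, by the remark preceding Lemma~\ref{lem5} concerning products of almost open surjections, almost open.

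Next I would observe that $\Psi$ carries $C_p(X,K)$ into $C_p(X,M)$, since $\psi \circ f$ is continuous whenever $f$ is. By hypothesis, $C_p(X,K)$ is a dense, non-meager (respectively Baire) subspace of $K^X$, so Lemma~\ref{lem5} applies with $P = K^X$, $L = M^X$, $\psi = \Psi$, and $E = C_p(X,K)$, yielding that $\Psi(C_p(X,K))$ is non-meager (respectively Baire) as a subspace of $M^X$. Moreover, since $\Psi$ is continuous and surjective and $C_p(X,K)$ is dense in $K^X$, the image $\Psi(C_p(X,K))$ is dense in $M^X$, and hence dense in the intermediate subspace $C_p(X,M)$ as well.

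To conclude, I would invoke the routine fact that a topological space $Y$ containing a dense subspace $D$ which is non-meager (respectively Baire) is itself non-meager (respectively Baire): for the non-meager case, any cover of $Y$ by closed nowhere dense sets restricts by density of $D$ to a cover of $D$ by sets that are still nowhere dense in $D$; for the Baire case, any sequence of open dense subsets of $Y$ restricts to open dense subsets of $D$ whose intersection, dense in $D$, is therefore dense in $Y$. Applying this to $D = \Psi(C_p(X,K))$ and $Y = C_p(X,M)$ finishes the proof. The only step with real content is the almost-openness of $\Psi$, which is delivered by the preceding remark; the rest is tracking definitions.
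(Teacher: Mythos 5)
Your proposal is correct and follows essentially the same route as the paper: form the product map $\psi^X=\prod_{x\in X}\psi:K^X\to M^X$, note it is almost open by the remark on products, apply Lemma~\ref{lem5} with $E=C_p(X,K)$, observe the image lies in $C_p(X,M)$ and is dense, and conclude via the standard fact that a space with a dense non-meager (Baire) subspace is itself non-meager (Baire). You merely spell out the density of the image and the final transfer step more explicitly than the paper does.
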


\begin{proof} The mapping $\psi^X: K^X\rightarrow M^X$ is a continuous almost open
mapping. Since $C_p(X,K)$ is dense in $K^X$ and is non-meager
(Baire) then, by Lemma \ref{lem5}, $\psi^X(C_p(X,K))$ is
non-meager (Baire). Note that $\psi^X(C_p(X,K))=\{\psi\circ f:
f\in C_p(X,K)\}\subseteq C_p(X,M)$. Thus, $C_p(X,M)$ contains a
dense non-meager (Baire) subspace, hence, it also is non-meager
(Baire).

\end{proof}

\begin{lemma}\label{lem7} (\cite{arhPon}) Let $L$ be a compact space without
isolated points, $T$ be the Cantor set. Then there exists a
surjective continuous irreducible mapping $\psi:T\rightarrow L$.
\end{lemma}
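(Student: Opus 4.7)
The plan is to combine three classical ingredients: the Hausdorff--Alexandroff theorem producing some surjection from $T$ onto $L$, a Zorn's lemma shrinking argument to make the map irreducible, and Brouwer's topological characterization of the Cantor set to identify the resulting domain with $T$ again. Note that for the statement to be non-vacuous we need $L$ to be a continuous image of $T$, i.e.\ $L$ must be compact metrizable; this is the setting in which the lemma is used.

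First I would invoke the Hausdorff--Alexandroff theorem to obtain a continuous surjection $f_0: T \to L$. Then I would consider the family $\mathcal{F}$ of all closed subsets $C \subseteq T$ with $f_0(C) = L$, partially ordered by reverse inclusion. To apply Zorn's lemma, for any decreasing chain $\{C_\alpha\} \subseteq \mathcal{F}$ I would set $C = \bigcap_\alpha C_\alpha$ and verify $f_0(C) = L$: for each $y \in L$, the sets $f_0^{-1}(y) \cap C_\alpha$ form a decreasing family of non-empty compact subsets of $T$, so by the finite intersection property their total intersection is non-empty, which exhibits a preimage of $y$ in $C$. Hence $\mathcal{F}$ has a maximal element $C^*$, and the restriction $f := f_0\upharpoonright C^*: C^* \to L$ is a continuous surjection which, by maximality, is irreducible.

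Next I would show $C^* \cong T$. As a closed subset of the Cantor set, $C^*$ is compact, metrizable and zero-dimensional, so by Brouwer's characterization it only remains to show $C^*$ has no isolated points. Suppose for contradiction that $x \in C^*$ is isolated; then $C^* \setminus \{x\}$ is a proper closed subset of $C^*$, so by irreducibility $f(C^* \setminus \{x\}) \subsetneq L$. Since $f(C^* \setminus \{x\})$ is compact and hence closed in $L$, its complement is a non-empty open subset of $L$; because $f$ is surjective this complement is contained in $\{f(x)\}$, and so equals $\{f(x)\}$. But then $f(x)$ is isolated in $L$, contradicting the hypothesis that $L$ has no isolated points. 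Composing $f$ with any homeomorphism $T \to C^*$ yields the desired surjective continuous irreducible map $\psi:T\to L$.

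The substantive step is the ``no isolated points'' argument in the last paragraph, since it is the place where compactness of $C^*$ (ensuring $f(C^* \setminus \{x\})$ is closed) combines with irreducibility to transfer the hypothesis from $L$ back to $C^*$. The other steps are essentially bookkeeping around standard theorems (Hausdorff--Alexandroff, Zorn, Brouwer), and the compactness-based verification that the chain intersection still maps onto $L$ is routine.
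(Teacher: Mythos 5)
Your proof is correct: the paper does not prove this lemma but cites it from Arkhangel'skii--Ponomarev, and your argument (Hausdorff--Alexandroff surjection, a Zorn's lemma passage to a minimal closed set mapping onto $L$, and Brouwer's characterization of the Cantor set applied after the isolated-point argument) is exactly the standard proof of that cited result. You also rightly note the implicit assumption that $L$ is metrizable (and non-empty), which is the setting in which the lemma is applied to the Peano continuum $\mathbb{K}$.
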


Recall that a {\it Peano continuum}  is called a connected and
locally connected metric compact space. Denote by $\mathbb{K}$ a
Peano continuum $X$ such that $|X|>1$.

\begin{lemma}\label{lem8} There exists a surjective continuous almost open mapping $\psi:
\mathbb{I}\rightarrow \mathbb{K}$.
\end{lemma}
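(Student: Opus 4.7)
My plan is to construct $\psi$ by a structured variant of the Hahn--Mazurkiewicz construction, arranged so that each piece of a nested refinement of $\mathbb{I}$ is mapped onto a subset of $\mathbb{K}$ with nonempty interior in $\mathbb{K}$; almost openness will then be automatic, since every nonempty open subset of $\mathbb{I}$ contains some such piece once $n$ is large enough. A generic continuous surjection $\mathbb{I}\rightarrow\mathbb{K}$ provided by Hahn--Mazurkiewicz need not be almost open (its image could sweep through a low-dimensional arc on a long subinterval), so control at every scale is essential.

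Concretely, fix a compatible metric on $\mathbb{K}$. By induction on $n\in\mathbb{N}$, I would produce (a) a refining sequence of partitions $\pi_n=\{I_{n,1},\dots,I_{n,m_n}\}$ of $\mathbb{I}$ into closed subintervals whose maximal length tends to $0$, and (b) closed sets $K_{n,1},\dots,K_{n,m_n}\subseteq\mathbb{K}$ such that each $K_{n,k}=\overline{U_{n,k}}$ for some nonempty connected open $U_{n,k}\subseteq\mathbb{K}$, $\operatorname{diam} K_{n,k}<1/n$, $K_{n,k}\cap K_{n,k+1}\neq\emptyset$, $\bigcup_{k}K_{n,k}=\mathbb{K}$, and the refinement compatibility $K_{n,k}=\bigcup\{K_{n+1,j}:I_{n+1,j}\subseteq I_{n,k}\}$ holds. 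Such a nested system can be produced because $\mathbb{K}$ is a connected, locally connected, metric compactum: each open connected $U_{n,k}$ can be covered by finitely many smaller connected open subsets whose closures chain together, and the chains at level $n+1$ can be aligned with the chains built from the neighboring $U_{n,k\pm 1}$ at each interface. Define $\psi:\mathbb{I}\rightarrow\mathbb{K}$ by $\{\psi(t)\}=\bigcap_{n\ge 1} K_{n,k_n(t)}$ whenever $t\in I_{n,k_n(t)}$ (with the natural convention at the finitely many partition endpoints); the shrinking diameters in (b) force $\psi$ to be well-defined and uniformly continuous, and a short induction using connectedness yields the identity $\psi(I_{n,k})=K_{n,k}$ for every $n,k$.

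Granted this structure, almost openness and surjectivity come for free: any nonempty open $V\subseteq\mathbb{I}$ contains some $I_{n,k}$ for $n$ large, so $\psi(V)\supseteq K_{n,k}\supseteq U_{n,k}$ has nonempty interior in $\mathbb{K}$, while $\bigcup_{k}K_{1,k}=\mathbb{K}$ gives surjectivity. The principal technical obstacle lies in the inductive step: one must subdivide each $K_{n,k}=\overline{U_{n,k}}$ into finitely many closures of connected open subsets of $\mathbb{K}$ of diameter less than $1/(n+1)$ that chain together, and must simultaneously perform the subdivisions of $K_{n,k-1}$, $K_{n,k}$, $K_{n,k+1}$ so that the global chain-intersection property is preserved across each interface. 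Pushing this through cleanly requires uniform local connectedness of the Peano continuum $\mathbb{K}$ in an essential way.
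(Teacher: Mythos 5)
The gap is precisely at the inductive step you flag as the ``principal technical obstacle'', and it is not a removable technicality: as stated, that step is false. Your invariant records only that each $K_{n,k}$ is the closure $\overline{U_{n,k}}$ of a connected open subset of $\mathbb{K}$ of small diameter, and the refinement identity $K_{n,k}=\bigcup\{K_{n+1,j}:I_{n+1,j}\subseteq I_{n,k}\}$ forces every piece $K_{n+1,j}=\overline{U_{n+1,j}}$ assigned to a subinterval of $I_{n,k}$ to be a subset of $\overline{U_{n,k}}$ of diameter $<1/(n+1)$. For a general connected open $U\subseteq\mathbb{K}$ this is impossible. Take $\mathbb{K}=[0,1]^2$ and $U=\{(x,y):0<x<1,\ 0<y<\tfrac12+\tfrac14\sin(1/x)\}$ (scale it into a small disc if you want small diameter): $U$ is open and connected, but $\overline{U}$ contains the segment $\{0\}\times[0,\tfrac34]$ and fails to be locally connected at its points with $y>\tfrac14$. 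Any connected subset of $\overline{U}$ whose closure contains the point $(0,\tfrac35)$ must meet infinitely many of the ``fingers'' of $U$, and passing from one finger to the next inside $\overline{U}$ forces the $y$-coordinate to drop to $\tfrac14$, so its diameter is bounded below by a fixed positive constant. Hence $(0,\tfrac35)$ lies in no admissible small piece $\overline{U_{n+1,j}}\subseteq\overline{U}$, and the exact refinement identity cannot be achieved; equivalently, $U$ itself cannot be covered by finitely many small connected open subsets of $U$, since each such subset meets at most one finger. Appealing to uniform local connectedness of $\mathbb{K}$ does not help: that property is not inherited by the open sets $U_{n,k}$ chosen at earlier stages, and nothing in your invariant excludes such sets, so the induction cannot proceed.

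To salvage your scheme you would have to carry a genuinely stronger invariant -- for instance that each $U_{n,k}$ has Property S (is uniformly locally connected) -- and prove a Whyburn-type subdivision lemma saying that such a set splits into finitely many small connected open subsets again with Property S whose closures chain and exhaust $\overline{U_{n,k}}$; alternatively one can abandon the exact equality, keep the new pieces well inside the old open sets, and restore surjectivity and the identity $\psi(I_{n,k})\supseteq(\text{a set with interior})$ via closed shrinkings and a compactness (K\"onig) argument. Either repair is a substantive missing ingredient, not a routine detail. Note that the paper's proof avoids the difficulty entirely: it constructs $\psi$ as a uniform limit of maps which, on a disjoint sequence of Cantor sets $T_i$ whose union is dense in $\mathbb{I}$, are irreducible onto regular closed Peano subcontinua $f_i(T_i)=\overline{Int\, f_i(T_i)}$, and which fill the complementary intervals by small paths; almost openness then follows because every nonempty open $V\subseteq\mathbb{I}$ contains a nonempty relatively open subset of some $T_i$, whose irreducible image contains a nonempty open subset of $f_i(T_i)$ and hence of $\mathbb{K}$, with no exact decompositions required.
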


\begin{proof} Let $\rho$ be a metric in $\mathbb{K}$.

Note that true the following results \cite{Kr}:

1). for every $\epsilon>0$ there is $\delta>0$ such that if
$x,y\in \mathbb{K}$ and $\rho(x,y)<\delta$ there is  a path
$L(x,y)$ such that $diam L(x,y)<\epsilon$ ($L(x,y):=f(\mathbb{I})$
where $f:\mathbb{I}\rightarrow X$ is a continuous mapping such
that $f(0)=x$ and $f(1)=y$).

2). for every $x\in \mathbb{K}$ there is a base $\{O(x)\}$ of
neighborhoods of $x$ such that $\overline{O(x)}$ is a Peano
continuum for every $O(x)\in \{O(x)\}$.

Renumber the rational numbers $\mathbb{Q}\cap \mathbb{I}$ as
$\{r_n: n\in \mathbb{N}\}$ in the segment $\mathbb{I}$. By
induction we construct a disjoint sequence $\{T_i: i\in
\mathbb{N}\}$ of copies of the Cantor set and a sequence $\{f_i:
i\in \mathbb{N}\}$ of surjective continuous mappings
$f_i:\mathbb{I}\rightarrow \mathbb{K}$ such that

3). $diam T_i\rightarrow 0$,

4). $conv T_i\cap T_j=\emptyset$ for $i>j$,

5). $\{r_i: i=1,...,n\}\subseteq \bigcup\limits_{i=1}^n T_i$ for
$n\in \mathbb{N}$,

6). $diam f_n(conv T_n)<\frac{1}{2^{n+1}}$ for $n\in \mathbb{N}$,

7). $f_1(T_1)=\mathbb{K}$, $f_i\upharpoonright T_i$ is irreducible
and $f_i(T_i)=\overline{Int f_i(T_i)}$ for $i\in \mathbb{N}$,

8). $f_{n+1}\upharpoonright \bigcup\limits_{i=1}^n
T_i=f_n\upharpoonright \bigcup\limits_{i=1}^n T_i$ for $n\in
\mathbb{N}$,

9). $||f_{n+1}-f_n||<\frac{1}{2^n}$ for $n\in \mathbb{N}$.

Construct $f_1$ and $T_1$. Let $T_1$ be an arbitrary Cantor set in
the segment $\mathbb{I}$ and $r_1\in T_1$. By Lemma \ref{lem7},
there is a surjective continuous irreducible mapping
$\varphi_0:T_1\rightarrow \mathbb{K}$. Let $(a_i,b_i)$, $i\in
\mathbb{N}$ be adjacent intervals of $T_1$. Then
$\lim\limits_{i\rightarrow \infty} |a_i-b_i|=0$. Since $\varphi_0$
is uniform continuous, by the condition 1), there are paths
$L(\varphi_0(a_i),\varphi_0(b_i))$ such that $diam
L(\varphi_0(a_i),\varphi_0(b_i))\rightarrow 0$. Let
$\varphi_i:[a_i,b_i]\rightarrow L(\varphi_0(a_i),\varphi_0(b_i))$
be homeomorphisms such that $\varphi_i(a_i)=\varphi_0(a_i)$,
$\varphi_i(b_i)=\varphi_0(b_i)$, $i\in \mathbb{N}$. Then
$f_1=\bigcup\limits_{i=0}^{\infty} \varphi_i$ is a required
mapping.

Assume that $f_1$, ..., $f_n$ and $T_1$,..., $T_n$ satisfying
conditions 3).-9). are constructed. Let $k=\min\{m: r_m\not\in
\bigcup\limits_{i=1}^n T_i\}$. Consider a neighborhood
$O(f_n(r_k))$ of $f_n(r_k)$ such that
$\overline{O(f_n(r_k))}<\frac{1}{2^{n+1}}$ and
$\overline{O(f_n(r_k))}$ is a Peano continuum. Since $f_n$ is
continuous there is $\epsilon>0$ such that
$I_{n+1}=[r_k-\epsilon,r_k+\epsilon]\cap (\bigcup\limits_{i=1}^n
T_i)=\emptyset$ and $f_n(I_{n+1})\subseteq O(f_n(r_k))$. Let
$T_{n+1}$ be a copy of the Cantor set such that $T_{n+1}\subseteq
I_{n+1}$, $r_k\in T_{n+1}$. Construct a surjective continuous
mapping $\psi: I_{n+1}\rightarrow \overline{O(f_n(r_k))}$ such
that $\psi\upharpoonright T_{n+1}: T_{n+1}\rightarrow
\overline{O(f_n(r_k))}$ is surjective irreducible and
$\psi(r_k-\epsilon)=f_n(r_k-\epsilon)$,
$\psi(r_k+\epsilon)=f_n(r_k+\epsilon)$. Let $f_{n+1}(x)=f_n(x)$
for $x\in [0,r_k-\epsilon]\cup [r_k+\epsilon,1]$ and
$f_{n+1}(x)=\psi(x)$ for $x\in [r_k-\epsilon,r_k+\epsilon]$. Clear
that $f_{n+1}$ is a continuous mapping and the conditions 3).-9).
holds.By the condition 9), there is the mapping
$f=\lim\limits_{n\rightarrow \infty} f_n$ and $f$ is continuous.
By the conditions 7) and 8), $f(\mathbb{I})=\mathbb{K}$. By the
condition 5), $\bigcup\limits_{i=1}^{\infty} T_i$ is dense in
$\mathbb{I}$. It follows that ( and by the condition 7)) $f$ is
almost open.
\end{proof}

\begin{theorem} Let $X$ be a separable $C^*$-space. Then the following assertions are equivalent:

1. $C_p(X,\mathbb{I})$ is Baire;

2. $C_p(X,\mathbb{K})$ is Baire.

\end{theorem}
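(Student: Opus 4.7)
The plan is to prove the two implications separately, using the machinery of Lemmas \ref{lem6} and \ref{lem8} for one direction and adapting the Baire-category argument of Theorem \ref{th1} for the other.

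For $(1)\Rightarrow(2)$ I would apply Lemma \ref{lem8} to obtain a surjective continuous almost open map $\psi:\mathbb{I}\to\mathbb{K}$. Since $X$ is Tychonoff, $C_p(X,\mathbb{I})$ is a dense subspace of $\mathbb{I}^X$, so Lemma \ref{lem6} (with $K=\mathbb{I}$, $M=\mathbb{K}$) yields at once that $C_p(X,\mathbb{K})$ is Baire.

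For $(2)\Rightarrow(1)$ my plan is to establish the $\mathbb{B}$-property for $X$ and then invoke Theorem \ref{th1} via the separable $C^*$ hypothesis. First I would record that $C_p(X,\mathbb{K})$ is dense in $\mathbb{K}^X$: the product map $\psi^X:\mathbb{I}^X\to\mathbb{K}^X$ is continuous and surjective, so it sends the dense set $C_p(X,\mathbb{I})$ to a dense subset of $\mathbb{K}^X$ that lies inside $C_p(X,\mathbb{K})$. Then, given a pairwise disjoint family $\{\Delta_n\}$ of nonempty finite subsets of $X$ with decompositions $\Delta_n=A_n\cup B_n$ (both nonempty), I would fix distinct $a,b\in\mathbb{K}$ and open balls $V_a\ni a$, $V_b\ni b$ with $\overline{V_a}\cap\overline{V_b}=\emptyset$, and put
\[
\mathcal{G}_n=\{f\in C_p(X,\mathbb{K}):f(A_n)\subseteq V_a,\ f(B_n)\subseteq V_b\}.
\]
Each $\mathcal{G}_n$ is open in $C_p(X,\mathbb{K})$, and using pairwise disjointness of the $\Delta_n$'s together with the density of $C_p(X,\mathbb{K})$ in $\mathbb{K}^X$, I would argue that $\bigcup_{n\geq m}\mathcal{G}_n$ is dense in $C_p(X,\mathbb{K})$ for every $m$. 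Applying Baireness of $C_p(X,\mathbb{K})$ produces $g\in\bigcap_m\bigcup_{n\geq m}\mathcal{G}_n$, which yields a subsequence $(n_k)$ with $g(A_{n_k})\subseteq V_a$ and $g(B_{n_k})\subseteq V_b$ for all $k$. Since $\mathbb{K}$ is a metric compactum, a Urysohn function $h:\mathbb{K}\to\mathbb{I}$ with $h|_{\overline{V_a}}=0$, $h|_{\overline{V_b}}=1$ exists, so $h\circ g$ functionally separates $\bigcup_k A_{n_k}$ from $\bigcup_k B_{n_k}$ in $X$; this is the $\mathbb{B}$-property, and Theorem \ref{th1} concludes that $C_p(X,\mathbb{I})$ is Baire.

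The main obstacle I anticipate is the density verification for $\bigcup_{n\geq m}\mathcal{G}_n$. For a basic open set $W=\{f:f(y_i)\in V_i,\ i=1,\dots,r\}$ in $C_p(X,\mathbb{K})$, I would choose $n\geq m$ with $\Delta_n\cap\{y_1,\dots,y_r\}=\emptyset$, which is possible because $\{y_1,\dots,y_r\}$ is finite and the $\Delta_n$ are pairwise disjoint. Then the joint conditions $f(y_i)\in V_i$, $f(A_n)\subseteq V_a$, $f(B_n)\subseteq V_b$ carve out a nonempty open box in $\mathbb{K}^X$, and density of $C_p(X,\mathbb{K})$ in $\mathbb{K}^X$ supplies a continuous representative inside $W\cap\mathcal{G}_n$. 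Once this density is in hand, the rest of the argument is standard Baire-category reasoning, and the appeal to Theorem \ref{th1} closes the implication.
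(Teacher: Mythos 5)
Your proof is correct and follows essentially the same route as the paper: Lemma \ref{lem8} combined with Lemma \ref{lem6} for $(1)\Rightarrow(2)$, and for $(2)\Rightarrow(1)$ a Baire-category extraction in $C_p(X,\mathbb{K})$ that yields the $\mathbb{B}$-property and then invokes Theorem \ref{th1}. The only cosmetic difference is that you work with the dense open sets $\bigcup_{n\geq m}\mathcal{G}_n$ and compose with a Urysohn function on $\mathbb{K}$ at the end (making the density verification explicit), whereas the paper fixes a continuous surjection $\psi:\mathbb{K}\rightarrow\mathbb{I}$ first and phrases the same argument through the nowhere dense complements $F_m$, citing arcwise connectedness.
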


\begin{proof}
$(1)\Rightarrow(2)$. By Lemma \ref{lem8}, there is a surjective
continuous almost open mapping $\psi: \mathbb{I}\rightarrow
\mathbb{K}$. Then, by Lemma \ref{lem6}, $C_p(X,\mathbb{K})$ is
Baire.

$(2)\Rightarrow(1)$. Since $|\mathbb{K}|>1$ there exists a
surjective continuous mapping $\psi: \mathbb{K}\rightarrow
\mathbb{I}$. Let $\{\Delta_n: n\in \mathbb{N}\}$ be a disjoint
sequence of non-empty finite sets of $X$ and $\Delta_n=A_n\cup
B_n$ where $A_n\cap B_n=\emptyset$ for each $n\in \mathbb{N}$. Let
$M_n=\{f\in C(X,\mathbb{K}): \psi f(A_n)\subseteq
[0,\frac{1}{3}]$, $\psi f(B_n)\subseteq [\frac{2}{3},1]\}$ for
$n\in \mathbb{N}$, $F_m=\bigcap\limits_{n\geq
m}\{C(X,\mathbb{K})\setminus M_n\}$, $m\in \mathbb{N}$. Since
$\mathbb{K}$ is arcwise connected then $F_m$ is nowhere dense for
each $m\in \mathbb{N}$. Let $f_0\in C(X,\mathbb{K})\setminus
\bigcup\limits_{m=1}^{\infty} F_m$. Then there is a subsequence
$\{M_{n_k}: k\in \mathbb{N}\}$ such that $f_0\in
\bigcap\limits_{k=1}^{\infty} M_{n_k}$. Hence, $\psi
f_0(\bigcup\limits_{k=1}^{\infty} A_{n_k})\subseteq
[0,\frac{1}{3}]$, $\psi f_0(\bigcup\limits_{k=1}^{\infty}
B_{n_k})\subseteq (\frac{2}{3},1]$. By Theorem \ref{th1},
$C(X,\mathbb{I})$ is Baire.
\end{proof}

\begin{corollary} Let $X$ be a normal space and $\mathbb{K}$ be a Peano continuum. Then $C_p(X,\mathbb{I})$ is Baire if, and only if,
$C_p(X,\mathbb{K})$ is Baire.
\end{corollary}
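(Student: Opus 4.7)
The plan is to observe that this corollary is an immediate consequence of the preceding theorem once we verify that the hypothesis of normality is stronger than the hypothesis of being a separable $C^*$-space. Since no substantive new argument is needed, the proof will be essentially a one-line reduction.

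First I would recall the definition recorded in Section~2: a Tychonoff space is a \emph{separable $C^*$-space} if every separable closed subset of it is $C^*$-embedded. It is also noted in the paper (just after the definition and again in Section~2) that in a normal space every closed subset is $C^*$-embedded. Hence every separable closed subset of a normal space $X$ is automatically $C^*$-embedded, so $X$ is a separable $C^*$-space. This is the only verification step required.

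Having established that $X$ is a separable $C^*$-space, I would then directly invoke the preceding theorem, which asserts that for such spaces $C_p(X,\mathbb{I})$ is Baire if and only if $C_p(X,\mathbb{K})$ is Baire for any Peano continuum $\mathbb{K}$. Substituting $X$ yields the claimed equivalence in the corollary.

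There is essentially no obstacle here: the corollary is a pure specialization of the theorem to the subclass of normal spaces, exactly analogous to the earlier corollary that specialized Theorem~\ref{th1} from separable $C^*$-spaces to normal spaces. The only thing to be careful about is to state explicitly the implication ``normal $\Rightarrow$ separable $C^*$'' so that the reader sees why the hypothesis of the previous theorem is satisfied; everything else is a direct appeal.
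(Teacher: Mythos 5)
Your proposal is correct and matches the paper's (implicit) reasoning exactly: the paper states no separate proof because the corollary is just the specialization of the preceding theorem to normal spaces, using the observation already made in Section~2 that every normal space is a separable $C^*$-space. Nothing further is needed.
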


\medskip

{\bf Acknowledgements.} We would like to thank Evgenii Reznichenko
for several valuable comments on Propositions \ref{pr1} and
\ref{pr2}.

\bibliographystyle{model1a-num-names}
\bibliography{<your-bib-database>}

\begin{thebibliography}{10}



\bibitem{arhPon}
A.V. Arkhangel'skii, V.I. Ponomarev, Fundamentals of General
Topology: Problems and Exercises, Published by Springer
Netherlands,2001.



\bibitem{Eng}
R. Engelking, General Topology, Revised and completed edition,
Heldermann Verlag Berlin (1989).




\bibitem{pyt1}
E.G. Pytkeev, Baire property of spaces of continuous functions.
Mathematical Notes of the Academy of Science of the USSR {\bf 38}
(1985), 908--915.(Translated from Matematicheskie Zametki, {\bf
38}:5, 726--740).

\bibitem{Kr}
K. Kuratovski,  Topology I, Academic Press, New York 1966.

\bibitem{LM}
D. Lutzer, R. McCoy, Category in function spaces. I, Pacific J.
Math. 90 (1980), no.1, 145--168.

\bibitem{vD}
J.van Mill ed, Eric K. van Douwen, Collected Papers, vol. 1,
North-Holland, Amsterdam (1994).

\bibitem{Os}
A.V. Osipov, Baire property of space of Baire-one functions,
preprint.


\bibitem{Ta}
A.D. Taimanov, On extension of continuous mappings of topological
spaces.(Russian) Mat. Sbornik N.S. {\bf 31}:73 (1952) 459--463.


\bibitem{tk}
V.V. Tkachuk, Characterization of the Baire property in $C_p(X)$
by the properties of the space $X$, Researh papers, Topology-Maps
and Extensions of Topological Spaces (Ustinov)(1985), 21--27.

\bibitem{Tk}
V.V. Tkachuk, A $C_p$-Theory Problem Book. Topological and
Function spaces., Springer, 2011.

\bibitem{Vid}
G. Vidossich, On topological spaces whose functions space is of
second category, Invent.Math., {\bf 8}:2 (1969), 111--113.

\bibitem{ZMc}
D.J. Zutzer, R.A. McCoy, Category in function spaces, Pacif. J.
Math., {\bf 90}:1 (1980), 145--168.

\bibitem{Wil}
A. Wilansky, Functional analysis, Blaisdell Publishing Company,
New York, 1964.


\bibitem{Sh}
D.B. Shakhmatov, A pseudocompact Tychonoff space all countable
subsets of which are closed and $C^*$-embedded, Topology and its
Application {\bf 22} (1986), 139--144.

\bibitem{JuWe}
I. Juh\'{a}sz, M. Weiss, Countable compact space without perfect
countable subsets, Comm. Math. Univ. Corol. (1983).

\bibitem{GJ}
L. Gillman, M. Jerison, Rings of Continuous Functions, D. Van
Nostrand Co., New York, 1960.



\end{thebibliography}

\end{document}